\documentclass[reqno]{amsart}
\usepackage[scale=0.75, centering, headheight=14pt]{geometry}
\usepackage[T1]{fontenc}
\usepackage{lmodern}
\usepackage[english]{babel}

\usepackage{tikz}
\usepackage{bbm}
\usepackage{amsmath,amssymb,amsfonts,amsthm}
\usepackage{mathtools,accents}
\usepackage{mathrsfs}
\usepackage{xfrac}
\usepackage{array} 
\usepackage{aliascnt}
\usepackage{booktabs} % for much better looking tables
\usepackage{array} % for better arrays (eg matrices) in maths

\usepackage{verbatim} % adds environment for commenting out blocks of text & for better verbatim
\usepackage{subfig} % make it possible to include more than one captioned figure/table in a single float

\usepackage{mathrsfs, dsfont}
\usepackage{amssymb}
\usepackage{amsthm}
\usepackage{amsmath,amsfonts,amssymb,esint}
\usepackage{graphics,color}
\usepackage{enumerate}
\usepackage{mathtools,centernot}

\usepackage{microtype}
\usepackage{paralist} % very flexible & customisable lists (eg. enumerate/itemize, etc.)
\usepackage{cases}
\usepackage[initials]{amsrefs}
\allowdisplaybreaks

\usepackage{braket}
\usepackage{bm}

\usepackage[citecolor=blue,colorlinks]{hyperref}
\addto\extrasenglish{}

\usepackage{enumerate}
\usepackage{xcolor}
\usepackage{aliascnt}
\numberwithin{equation}{section}

\makeatletter
\def\newaliasedtheorem#1[#2]#3{
  \newaliascnt{#1@alt}{#2}
  \newtheorem{#1}[#1@alt]{#3}
  \expandafter\newcommand\csname #1@altname\endcsname{#3}
}
\makeatother

\def\avint{\mathop{\mathchoice{\,\rlap{-}\!\!\int}
                              {\rlap{\raise.15em{\scriptstyle -}}\kern-.2em\int}
                              {\rlap{\raise.09em{\scriptscriptstyle -}}\!\int}
                              {\rlap{-}\!\int}}\nolimits}

\def\avint{\mathop{\,\rlap{-}\!\!\int}\nolimits}

\newcommand{\A}{\mathcal{A}}

\DeclareMathOperator{\id}{id_2}

\newcommand{\R}{\mathbb{R}}
\newcommand{\C}{\mathbb{C}}
\newcommand{\E}{\mathds{E}}
\newcommand{\EE}{\text{E}}

\newcommand{\N}{\mathbb{N}}

\newcommand{\eps}{\varepsilon}

\DeclareMathOperator{\cof}{cof}

\DeclareMathOperator{\Id}{Id}

\DeclareMathOperator{\dv}{div}

\DeclareMathOperator{\spt}{spt}

\DeclareMathOperator{\loc}{loc}

\DeclareMathOperator{\Lip}{Lip}

\DeclareMathOperator{\dist}{dist}

\theoremstyle{plain}

\newtheorem{Teo}{Theorem}[section]
\newtheorem{lemma}[Teo]{Lemma}
\newtheorem{prop}[Teo]{Proposition}

\newtheorem{Cor}[Teo]{Corollary}
\theoremstyle{definition}
\newtheorem{Def}[Teo]{Definition}
\theoremstyle{remark}
\newtheorem{rem}[Teo]{Remark}

\begin{document}

\title[prescribed vertical mean curvature system]{Regularity for the prescribed vertical mean curvature system in the plane}

\author[J. Hirsch]{Jonas Hirsch}
\address{Jonas Hirsch
\hfill\break Universit\"at Leipzig, Mathematisches Institut, Augustusplatz 10, 04109 Leipzig, Germany}
\email{hirsch.jonas@math.uni-leipzig.de}

\author[C. Mooney]{Connor Mooney}
\address{Connor Mooney
\hfill\break Department of Mathematics, UC Irvine}
\email{mooneycr@uci.edu}

\author[R. Tione]{Riccardo Tione}
\address{Riccardo Tione
\hfill\break Università degli Studi di Torino, dipartimento di Matematica “Giuseppe Peano”, Via Carlo Alberto 10, 10123 Turin, Italy}
\email{riccardo.tione@unito.it}

% ----------------------------------------------------------------
\begin{abstract}
We prove everywhere regularity for Lipschitz solutions to the prescribed vertical mean curvature system in the plane, where the codimension is arbitrary and the mean curvature is in $L^r, \, r > 2$.
\end{abstract}
% ----------------------------------------------------------------

\maketitle

\section{Introduction}

In this paper we study the regularity of Lipschitz maps $u = (u^1,\,\dots,\,u^n) : B_1 \subset \mathbb{R}^m \rightarrow \mathbb{R}^n$ that solve, in the sense of distributions, the system

\begin{equation}\label{PMC}
\frac{1}{\sqrt{\det g}}\partial_i(\sqrt{\det(g)}g^{ij}\partial_ju^l) = f^l, \quad l = 1,\dots,n.
\end{equation}
Here $g_{ij} = \delta_{ij} + \partial_iu \cdot \partial_ju,\, i,j = 1,\dots,m$, and $f = (f^1,\,\dots,\,f^n) \in L^r(B_1,\R^n),\, r \geq 1$.\ 

\medskip

If $u \in C^2$, system \eqref{PMC} says that the vertical projection of the mean curvature vector to the graph of $u$ (that is, the projection to the last $n$ variables in $\mathbb{R}^m \times \mathbb{R}^n$) is $f$. For this reason, we call \eqref{PMC} the \emph{prescribed vertical mean curvature system}. We also note that when $u \in C^2$, the horizontal projection $\tilde{f}$ of the mean curvature vector to the graph of $u$ is determined by $f$ and the fact that the mean curvature vector is normal to the graph of $u$,\ i.e. $\tilde{f} = -Du^Tf$. Equivalently, when $u \in C^2$ solves \eqref{PMC}, it also solves the system

\begin{equation}\label{PMCInner}
\frac{1}{\sqrt{\det g}}\partial_i(\sqrt{\det(g)}g^{ij}) = -f \cdot \partial_ju, \quad j = 1,\,\dots,\,m.
\end{equation}

\begin{rem}\label{variations}
More generally, whenever $v \in \Lip \cap W^{2,1}(B_1,\R^n)$ and $F$ is a smooth function on $\mathbb{R}^{n \times m}$, we have the implication 
\begin{equation}\label{eq:sys0}
\partial_i(F_{p^l_i}(Dv)) = H^l,\, l =1,\dots, n \quad \Longrightarrow \quad \partial_i(F(Dv)\delta_{ij} - F_{p^l_i}(Dv)v^l_j) = -H^lv^l_j, \, j =1,\dots, m.
\end{equation}
Here $F_{p^l_i}$ denotes the derivative of $F$ in the $p^l_i$ direction. The system on the left reflects that $v$ is a critical point of the energy 
\begin{equation}\label{ene}
\EE(v) = \int_{B_1} F(Dv) + (H,v)dx
\end{equation}
with respect to outer variations (i.e., $\EE(v + t\varphi) = \EE(v) + o(t)$ for all maps $\varphi \in C^{\infty}_c(B_1,\R^n)$), and the system on the right reflects that $v$ is a critical point of $\EE$ with respect to domain variations (i.e., $\EE(v(x + tV)) = \EE(v) + o(t)$ for all vector fields $V \in C^{\infty}_c(B_1,\R^m)$).\ If $D^2F > 0$ and $H \in L^2_{\loc}$, then Lipschitz solutions to the system on the left hand side of \eqref{eq:sys0} are in $W^{2,2}_{\loc}$ by standard regularity theory, and hence solve the system on the right hand side of \eqref{eq:sys0}. In this paper we are considering the area integrand $F(p) = \sqrt{\det(I + p^Tp)}$. This integrand is polyconvex (a convex function of the sub-determinants of $p$), and it is convex if and only if $\min\{m,\,n\} = 1$.
\end{rem}

If $u$ is Lipschitz and both systems \eqref{PMC} and \eqref{PMCInner} are assumed to hold with $f \in L^r_{\loc}$, then the graph of $u$ has mean curvature bounded in $L^r_{\loc}$ in the sense of varifolds and the regularity of $u$ is well-understood in all dimensions. Namely, provided $r > m$, we have $u \in W^{2,r}_{\loc}$ away from a closed singular set of Hausdorff dimension at most $m - 4$ \cite[Theorem 4.4]{DUG}. This bound on the size of the singular set is optimal even for $f = 0$, due to the Lawson-Osserman cone \cite[Theorem 7.1]{LOA}. Key tools in this setting that are not available when we only assume that \eqref{PMC} holds are the monotonicity formula and Allard's $\eps$-regularity theorem \cite{ALL}. 

\medskip

When $u$ is Lipschitz and we only assume that the system \eqref{PMC} holds, the situation is considerably more delicate, unless we are in the codimension one case $n = 1$.\ In that case solutions to \eqref{PMC} are minimizers of an energy of the form \eqref{ene}, and hence also solve \eqref{PMCInner}, and thus belong to $W^{2,r}_{\loc}(B_1,\R^n)$ when $r > m$, see \cite[Theorem 4.4 and Remarks]{DUG}. Celebrated examples of S. M\"{u}ller-V. \v{S}ver\'{a}k \cite{SMVS} and L. Sz{\'{e}}kelyhidi \cite{LSP} show that Lipschitz outer critical points of strongly quasiconvex, resp.\ polyconvex energies, can be extremely wild, e.g. nowhere $C^1$ (even when $m = n = 2$). Thus, positive regularity results for Lipschitz outer critical points of polyconvex energies must rely on additional structure.

\medskip

In their seminal work on the minimal surface system, H. B. Lawson and R. Osserman conjectured that Lipschitz solutions to \eqref{PMC} also solve \eqref{PMCInner} when $f = 0$, see \cite[Conjecture 2.1]{LOA}. In \cite{HMT} we proved the Lawson-Osserman conjecture in the planar case $m = 2$, for arbitrary codimension $n$. The purpose of this paper is to generalize this result to the case of nonzero right hand side. Our first main theorem is:

\begin{Teo}\label{thm:main}
If $u$ is a Lipschitz solution to (\ref{PMC}) in $B_1 \subset \mathbb{R}^2$ and $f \in L^r(B_1,\R^n)$ with $r > 2$, then $u \in W^{2,r}_{\loc}(B_{1},\R^n)$, hence $u \in C^{1,1-m/r}_{\loc}(B_1,\R^n)$ and solves \eqref{PMCInner}.
\end{Teo}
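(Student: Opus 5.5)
The plan is to reduce Theorem~\ref{thm:main} to the structure already exploited in \cite{HMT} for $f=0$: in the plane, the minimal surface system admits a reformulation in which suitable potentials satisfy a degenerate elliptic equation of Monge--Ampère type, and this rigidity forces $W^{2,2}$ regularity. With $f\neq 0$ I would carry out the same reformulation, now with lower-order terms controlled by $f\in L^r$, $r>2$.

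\textbf{Step 1: potentials.} Write \eqref{PMC} as $\partial_i(\sqrt{\det g}\,g^{ij}\partial_j u^l)=\sqrt{\det g}\,f^l$. The right-hand side lies in $L^r$ because $u$ is Lipschitz. Solve $\Delta w^l=\sqrt{\det g}\,f^l$ on a ball, which gives $w\in W^{2,r}\subset C^{1,1-2/r}$. Then the vector field $\sqrt{\det g}\,g^{ij}\partial_ju^l-\partial_iw^l$ is divergence free in $\R^2$, so it equals $J\nabla\psi^l$ for a Lipschitz $\psi^l$. This gives the first-order system
\[
\sqrt{\det g}\,g^{-1}Du^{T}=J\,D\psi^{T}+Dw^{T}.
\]
For the area integrand in $m=2$, the identity $\sqrt{\det g}\,g^{-1}=\operatorname{cof}(g)/\sqrt{\det g}$ is algebraic. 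Using it, as in \cite{HMT}, the map $(x,u,\psi)$ becomes a perturbation of a map whose differential lies in a fixed set of matrices; in the $f=0$ case this is the set corresponding to calibrated or holomorphic-type structure.

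\textbf{Step 2: structure.} Following \cite{HMT}, I would introduce the change of variables, either the Lawson--Osserman or isothermal-type map $x\mapsto x+\,$(suitable combination of $u$ and $\psi$), which in the $f=0$ case is a monotone bi-Lipschitz map whose inverse linearizes the problem. With $f\neq 0$ the relevant map differs by the term $Dw$, which is Hölder continuous. Monotonicity, i.e.\ strict positivity of the symmetric part of the differential, then persists on small balls after subtracting the affine part of $w$ at a point. The transformed unknowns would satisfy a uniformly elliptic linear system whose coefficients are Hölder continuous or have VMO-type control, with $L^r$ data.

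\textbf{Step 3: regularity.} I would apply $W^{2,r}$, or $W^{1,p}$-for-all-$p$, estimates for the transformed system, and pull back through the bi-Lipschitz map to get $u\in W^{2,2}_{\loc}$. Once $u\in W^{2,2}_{\loc}$, difference quotients show that $u$ solves a linear uniformly elliptic system, since $g$ is bounded. Standard $L^p$ theory together with Morrey embedding (coefficients in $W^{1,2}$ lie in VMO in 2D) upgrades this to $W^{2,r}_{\loc}$, and hence to $C^{1,1-2/r}$. Finally, Remark~\ref{variations} with $F$ the area integrand and $H=\sqrt{\det g}f$ gives \eqref{PMCInner}.

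\textbf{Main obstacle.} The hard step is Step 2: showing that the perturbed structure still yields a monotone, invertible change of variables with quantitative control, given that $Du$ is merely $L^\infty$ with no smallness. My first attempt would be a blow-up/compactness argument. At small scales the rescaled $w$ becomes affine, because $Dw$ is Hölder continuous, so blow-ups solve the $f=0$ problem; \cite{HMT} then applies to them. The task is to transfer monotonicity back to the original map on small balls, using the convexity of the monotonicity condition under weak-$*$ limits.
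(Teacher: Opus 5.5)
\textbf{There is a genuine gap: your Step 2 is unsupported, and it is where the whole proof would have to happen.} A classical monotone change of variables $x\mapsto x+(\cdots)$ of Lewy/Osserman type is built from potentials of the field $\sqrt{\det g}\,g^{-1}$. Those potentials exist only if $\dv(\sqrt{\det g}\,g^{-1})$ is controlled, i.e.\ only if the inner-variation system \eqref{PMCInner} holds. That system is not assumed; it is part of the conclusion, and for $f=0$ it is exactly the Lawson--Osserman conjecture. The potentials $\psi^l$ from your Step 1 come from the outer equations only, and you never exhibit a combination of $x,u,\psi$ whose differential has positive definite symmetric part. This is also not the mechanism of \cite{HMT}, which uses isothermal coordinates obtained from a Beltrami equation. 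Those coordinates give $u=v\circ\varphi$ with $\varphi$ merely quasiconformal, not bi-Lipschitz. A priori, $\psi=\varphi^{-1}$ is only in $W^{1,p}$ for some $p>2$, and it may degenerate on $S=\{D\psi=0\}$. Proving $\psi\in C^{1,\alpha}$ and then $S=\emptyset$ is the entire difficulty. Your Step 3 (``pull back through the bi-Lipschitz map'') assumes both away.

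\textbf{The blow-up fallback does not close the gap.} Your rescalings satisfy $\dv(D\A(Du_\rho))=\dv F_\rho$ with $F_\rho\to 0$, but they converge only weakly-$*$. Weak-$*$ limits of such approximate solutions need not solve the limit system; this is the $T_N$ phenomenon behind the wild critical points of \cite{SMVS,LSP}. Strong compactness for the area integrand is Theorem \ref{thm:convun}, which the paper proves with the same tools as Theorem \ref{thm:main}, so you cannot take it for free. Without a monotonicity formula, tangent maps also need be neither unique nor linear. There is no $\eps$-regularity theorem available to transfer information from a blow-up back to $u$. Finally, the convexity you invoke runs in the wrong direction: it lets a closed convex constraint pass from a sequence to its weak-$*$ limit, not a strict inequality from the limit back to the sequence.

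\textbf{The missing idea is the one the paper uses.}
\begin{enumerate}
\item In isothermal coordinates the map $(v,\psi)$ is conformal, so $H(D\psi)=-H(Dv)$ for the Hopf differential \eqref{defhopf}.
\item Lemma \ref{lem:dett} shows $H$ is elliptic on quasiconformal matrices. Using difference quotients and the null Lagrangian $\det$, this lets you bootstrap the integrability of $D\psi$ against $\Delta v=(h\circ\psi)\det D\psi$ until $v,\psi\in W^{2,2+\eps}_{\loc}$.
\item One then has $|\Delta\psi|\le C|D\psi|^2|h\circ\psi|$. A Hartman--Wintner expansion at zeros of $D\psi$, combined with a degree argument, contradicts injectivity of $\psi$. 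Hence $S=\emptyset$ and $u\in C^{1,\alpha}$.
\end{enumerate}
Once this is in place, your last step (upgrading to $W^{2,r}$ and deriving \eqref{PMCInner} via Remark \ref{variations}) is fine.
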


%\begin{rem}
%We in fact prove that $u \in W^{2,\,r}_{\loc}(B_1)$, thus (\ref{PMCInner}) holds, see Remark \ref{variations}.
%\end{rem}

Standard examples from Sobolev space theory show that we cannot expect that $u \in C^1$ when $r \leq 2$, even in codimension $n = 1$. For example, $u(x_1,\,x_2) \doteq x_1\sin(\log\log(1/|x|))$ is a Lipschitz function on a neighborhood of $0$ in $\mathbb{R}^2$ that solves (\ref{PMC}) with $f \in L^2$, but $u$ is not $C^1$.

\medskip

It would be extremely interesting to investigate what happens when $m \geq 3$. The counterexamples of \cite{SMVS,LSP} rely on showing that special sets of matrices known as $T_N$ configurations exist in the differential inclusion associated to the energy under investigation.\ It is natural to seek such configurations in the differential inclusions associated to the area functional.\ Through the same techniques used to show Theorem \ref{thm:main}, we prove the following rigidity statement for \emph{approximate solutions} to our PDE:
%This is obtained as corollary of the following rigidity statement for approximate solutions:

\begin{Teo}\label{thm:convun}
	Assume $(u_j)_j$ is a sequence of equibounded Lipschitz maps in $\Lip(B_1,\R^n)$, $B_1\subset \R^2$, whose weak-$*$ limit is the Lipschitz map $u$, and assume
	\[
		\dv(D\mathcal{A}(Du_j)) = \dv(F_j)
	\]
	with $F_j$ equibounded in $L^\infty(B_1)$ and converging strongly to $0$ in $L^1(B_1)$. Then, $(Du_j)_j$ converges strongly in $L^1$ and $u$ is a solution to $\dv(D\mathcal{A}(Du)) = 0$.
\end{Teo}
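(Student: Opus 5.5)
We want to show that $(Du_j)$ converges strongly in $L^1$. Once this holds, the limit equation is immediate. Indeed, $D\mathcal{A}$ is continuous and the $Du_j$ are equibounded, so $D\mathcal{A}(Du_j)\to D\mathcal{A}(Du)$ strongly in $L^1$. Since $F_j\to 0$ in $L^1$, passing to the limit in the distributional equation gives $\dv(D\mathcal{A}(Du))=0$.

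The plan follows the strategy of \cite{HMT}, adapted to approximate solutions via Young measures. Let $\nu=(\nu_x)_x$ be the (compactly supported) Young measure generated by $(Du_j)$, with barycenter $Du(x)$. Strong convergence is equivalent to $\nu_x=\delta_{Du(x)}$ for a.e. $x$. So it suffices to show that $\nu$ is a Dirac mass a.e.

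\textbf{Step 1: information on $\nu$.} Use the planar structure. Since $m=2$, the equation $\dv(D\mathcal{A}(Du_j)-F_j)=0$ lets us write $D\mathcal{A}(Du_j)-F_j = J Dw_j$, where $J$ is the rotation by $\pi/2$ and $w_j:B_1\to\R^n$ are equi-Lipschitz potentials. The pairs $(u_j,w_j)$ then have gradients $(Du_j, JD\mathcal{A}(Du_j)+ \text{error})$, with error $\to 0$ in $L^1$ and bounded in $L^\infty$. Hence the Young measure generated by $(Du_j,Dw_j)$ is supported on the graph of $JD\mathcal{A}$. It is a gradient Young measure of Lipschitz maps, so it commutes with all $2\times 2$ minors, and more generally with all quasiaffine functions, of the combined $2n\times 2$ gradient. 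In particular we get Jensen-type equalities: for every $2\times2$ minor $M$ of the $2n\times 2$ matrix $Z(A)=(A, JD\mathcal{A}(A))$,
\[
\int M(Z(A))\,d\nu_x(A)=M\Big(\int Z(A)\,d\nu_x(A)\Big).
\]
These hold together with the barycenter identity. This is the div-curl/compensated compactness input. It gives, pointwise in $x$, a probability measure on the $2$-dimensional area-critical "graph'' of the differential inclusion that is a laminate-like object commuting with minors.

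\textbf{Step 2: the rigidity argument.} This is the core of the proof. The main theorem of \cite{HMT} (proving Lawson--Osserman for $m=2$) must have been established by a rigidity statement for the differential inclusion $K=\{(A,JD\mathcal{A}(A))\}$. The relevant statement is that measures supported on $K$ that satisfy the minor relations above, i.e. polyconvex/quasiaffine-commuting measures, are Dirac masses. I would reprove or invoke this at the level of measures. The approach is to exhibit, for this specific $K$, a polyconvex (in fact quasiaffine-plus-strictly-convex) function $\Phi$ on $\R^{2n\times2}$ with two properties. First, $\Phi\ge 0$ on $K-Z_0$ for every $Z_0\in K$, with equality only at $0$. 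Second, its integral against $\nu_x$ shifted by the barycenter vanishes by the minor identities. This forces $\nu_x=\delta$. Concretely, I would test with $\Phi(Z)=\sum (\text{minors of } Z - Z_0)$ combined with the convexity of $\mathcal{A}$ in the minors (polyconvexity of area). The aim is to obtain an inequality of the form
\[
\langle D\mathcal{A}(A)-D\mathcal{A}(B),A-B\rangle - \text{(minor terms)}\geq c|A-B|^2.
\]
In two dimensions this should be derivable from the explicit structure $\mathcal{A}(A)=\sqrt{1+|A|^2+\sum\det(\text{minors})^2}$. The difficulty is that the barycenter $Du(x)$ need not lie in $K$ a priori, since $\dv D\mathcal{A}(Du)=0$ is what we are proving. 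So the argument must compare $\nu_x$ with a point of its own support rather than with the barycenter.

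I expect Step 2 to be the main obstacle. If a direct pointwise rigidity fails, the fallback is to run the approximate version of the \cite{HMT} proof directly. In that version one obtains uniform estimates (e.g. $W^{1,2}$-type bounds on a suitable nonlinear quantity such as $D\mathcal{A}(Du_j)$ or on the conformal parametrization of the graphs), with errors controlled by $\|F_j\|_{L^1}$ and $\|F_j\|_{L^\infty}$. One then uses these bounds to obtain compactness and thereby strong convergence.
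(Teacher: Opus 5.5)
Your preliminary reductions are fine. Once $Du_j\to Du$ in $L^1$ the limit equation follows, and strong convergence is equivalent to the Young measure being Dirac a.e. Also (writing $D\A(Du_j)-F_j=Dw_jJ$; $JDw_j$ is dimensionally wrong for $n\neq2$) the Young measure of $(Du_j,Dw_j)$ is supported on $K$ and commutes with all $2\times2$ minors.

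\textbf{The gap is Step 2.} It carries the whole theorem and is only asserted.
\begin{itemize}
\item Dirac-ness of every measure on $K$ commuting with minors is at least as strong as the theorem itself, since such measures include all gradient Young measures on $K$. You give no proof of it.
\item It is not what \cite{HMT} proves. Both \cite{HMT} and this paper argue through isothermal coordinates, not through a pointwise rigidity of $K$.
\item The barycenter issue you raise is harmless, since $\iint\Phi(Z-Z')\,d\nu_x\,d\nu_x=0$. The real problem is the existence of $\Phi$.
\end{itemize}
Polyconvexity does not supply such a $\Phi$. Writing $\A(A)=G(A,M(A))$ with $G$ convex only yields $\langle D\A(A)-D\A(B),A-B\rangle\ge(\lambda_A+\lambda_B)\cdot M(A-B)$, where $\lambda_A=\partial_MG(A,M(A))=M(A)/\A(A)$. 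Because these coefficients depend on the pair, the right-hand side is not a null Lagrangian in $Z_A-Z_B$.

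A constant-coefficient $\Phi$ with $\Phi(Z_A-Z_B)\ge c|A-B|^2$ on $K$ would in fact trivialize the problem. Take $W=(u,w)$ with $Dw=D\A(Du)J$, $W_h(x)=W(x+h)-W(x)$, and test $\int\Phi(D(\eta W_h))=0$ as in \eqref{eq:routine0}--\eqref{eq:routine1}. This gives $\int\eta^2|Du_h|^2\le C|h|^2$, i.e.\ $W^{2,2}_{\loc}$ regularity of every Lipschitz solution of $\dv D\A(Du)=0$ in a few lines. That is why the paper obtains the absence of $T_N$ configurations in $K$ only as a \emph{consequence} of the theorem (Remark \ref{rem:TN}). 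Your fallback ("run the approximate version of \cite{HMT}") names no estimate, so it does not close the gap.

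\textbf{The missing idea} is that the ellipticity is not a pointwise property of $K$ in the original variables. It appears only after a nonlocal change of variables, as follows.
\begin{enumerate}
\item Write $u_j=v_j\circ\varphi_j$, where $\varphi_j$ is the normalized solution of the Beltrami equation for $g_j=\id+Du_j^TDu_j$. The $\varphi_j$ and $\psi_j=\varphi_j^{-1}$ are uniformly quasiconformal, hence precompact and locally uniformly convergent.
\item In these coordinates $\Delta v_j=\dv G_j$ with $G_j=F_j\circ\psi_j\cof(D\psi_j)^T$ and $\|G_j\|_{L^2}\le C\|F_j\|_{L^2}\to0$. This is where both the $L^\infty$ bound and the $L^1$ convergence of $F_j$ are used. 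Hence $v_j\to v$ strongly in $W^{1,2}_{\loc}$, with $v$ harmonic.
\item Conformality of $(\psi_j,v_j)$ gives $H(D\psi_j)=-H(Dv_j)$.
\item Lemma \ref{lem:dett} (ellipticity of the Hopf differential on quasiconformal $2\times2$ matrices) is combined with the null Lagrangian $\det$ tested on $\eta(\psi_j-\psi_m)$. This converts strong convergence of $H(Dv_j)$ into strong convergence of $D\psi_j$, then of $D\varphi_j$, and finally of $Du_j=Dv_j(\varphi_j)D\varphi_j$ a.e.
\end{enumerate}
The determinant is thus used on the $2\times2$ inclusion for the conformal parametrization, with the Hopf differential as the error term, never on $K$ itself. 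This is exactly what a Young-measure argument on $K$ alone cannot see.
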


The absence of $T_N$ configurations in the inclusion set associated to critical points of the area functional when $m = 2$ is deduced in Remark \ref{rem:TN} as a corollary of this theorem.\ The same rigidity question remains open in higher dimensions.\ It would also be interesting to study the regularity of solutions to (\ref{PMC}) that are not assumed to be Lipschitz, but to lie in $W^{1,p}$ for some $p \geq 1$. We plan to investigate these directions in future work.

\medskip

We now describe our strategy for proving Theorem \ref{thm:main}. The starting point is the same as that in our proof for the case $f = 0$ in \cite{LOA}. Namely, by the use of isothermal coordinates (solving a Beltrami equation) we obtain a quasiconformal map $\psi$ such that the map $v = u \circ \psi$ solves the system
\[
\Delta v = \det D\psi\, h \circ \psi,\quad \text{ where $h \doteq \sqrt{\det g}f$.}
\]
When $f = 0$ we conclude that $v$ is harmonic; in particular, $v \in C^1$, which allowed us in \cite{LOA} to decompose $B_1$ into various regions where regularity of $u$ could be inferred for different reasons. However, when $f$ is nonzero, the above system for $v$ does not immediately give enough regularity to proceed in this manner. Instead, the key observation is that the system solved by $\psi$ implies that $(v,\,\psi)$ is a conformal map into $\mathbb{R}^{n+2}$. As a consequence, the Hopf differentials of $v$ and $\psi$ (see Section \ref{ProofMain} for the definition) sum to zero. After establishing an ellipticity property of the Hopf differential on quasiconformal matrices, we can use the regularity properties of $v$ from the system above to infer regularity properties of $\psi$ that are better than those we started with. We can then feed these back into the system above for $v$, bootstrapping until we get that both $v$ and $\psi$ are $C^{1,\,\alpha}$. The desired regularity for $u$ then follows at the points $\psi(y)$ such that $D\psi(y)$ is nonzero. In the final step of the proof we show that the set where $D\psi = 0$ is in fact empty, using ideas related to unique continuation originating in work of P. Hartman and A. Wintner \cite{HWW}.

\begin{rem}
In the last step of our proof we show the differential inequality $|\Delta \psi| \leq |Dv||h \circ \psi||D\psi|$. In particular, when $f = 0$, $\psi$ is harmonic, thus smooth with an isolated set of critical points. This gives a new proof, different from that in \cite{LOA}, of the regularity of $u$ in the case that $f = 0$.
\end{rem}

\begin{rem}
The Hopf differential also plays an important role in the theory of harmonic maps of Riemann surfaces, see \cite{HEL}.
\end{rem}

The paper is organized as follows. In Section \ref{PRELNOT} we establish notation and discuss some preliminary results about the structure of the area functional and quasiconformal maps. In Section \ref{ProofMain} we prove Theorem \ref{thm:main}, using a few results whose proofs are delayed to later sections.\ In Sections \ref{HopfEllipticity} and \ref{Bootstrapping} we prove those results, which include the ellipticity of the Hopf differential on quasiconformal matrices (Section \ref{HopfEllipticity}) and the bootstrapping argument alluded to above (Section \ref{Bootstrapping}).\ In Section \ref{sec:appsol} we use these techniques to show rigidity of approximate solutions, namely Theorem \ref{thm:convun}. Finally, in Appendix \ref{appa} we provide for the reader's convenience a proof of the Hartman-Wintner result with regularity hypotheses relaxed so as to apply in our setting.

\section*{Acknowledgments}
C. Mooney was supported by a Simons Research Fellowship and NSF grant DMS-2143668.\ R. Tione was funded by the Italian Ministry for University and Research (MUR), through FIS3 Starting Grant “GEMS” CUP: D53C25002540001 (Finanziata con il contributo del Ministero dell’Università e della ricerca ai sensi del D.D. n. 1802 del 21-11-2024 - BANDO FIS 3, Grant number FIS-2024-02219).

%%%%%

\section*{AI statement}

AI was not used in any part of the manuscript: the mathematical arguments were developed solely by the authors, who also wrote the entirety of the manuscript.

\section{Preliminaries and notation}\label{PRELNOT}
\subsection{Notation}
We denote by $X^T$ the transpose of $X \in \R^{n \times m}$. If $n = m$ and $X$ is invertible, we denote by $X^{-1}$ the inverse of $X$. We also write $\cof(X)$ for the cofactor matrix of $X$,  which satisfies
\begin{equation}
\label{eq:cof}
\cof(X) X = X \cof(X) = \det(X) \Id_m.
\end{equation}
Explicitly, in the case $n = 2$, if 
\[
X = \left(\begin{array}{cc}a & b\\ c & d\end{array}\right), \;\text{ then } \cof(X) = \left(\begin{array}{cc}d & -b\\ -c & a\end{array}\right).
\]
The standard Euclidean product of vectors $a,b \in \R^n$ is denoted by $(a,b)$, while the standard Euclidean product of matrices $A,B \in \R^{n\times m}$ is denoted by $\langle A,B \rangle$. The associated Euclidean norm is denoted both for vectors $a$ and matrices $A$ as $|a|$ and $|A|$.
\\
\\
We write $L^p(\Omega)$ if we consider $L^p$ scalar-valued functions, and $L^p(\Omega,\R^k)$ if we consider $\R^k$-valued functions. If we want to emphasize where the image of the map lies, say in $U \subset \R^k$, we will write $L^p(\Omega,U)$. We use the same convention for other functional spaces, such as Sobolev and H\"older spaces. When it is clear from the context, we will sometimes shorten the notation $\|h\|_{L^p(\Omega,\R^n)}$ to $\|h\|_p$.

\subsection{The area functional}
Here we recall some properties of the structure of the area functional. A more detailed explanation can be found in \cite[Section 2.2]{HMT}. As in that reference, we restrict our attention to the case in which the domain has dimension $m = 2$. As recalled in the introduction, the energy associated to the area integrand on graphs is
\[
\E(u) \doteq \int_{B_1}\A(Du)dx, \quad u \in \Lip(B_1,\R^n),
\]
where the area integrand on graphs $\mathcal{A}:\R^{n\times 2} \to \R$ reads as 
\[
\A(Z) \doteq \sqrt{\det(M(Z)^TM(Z))},\quad\text{ where } \quad
M(Z)\doteq
\left (
\begin{array}{c}
\id \\
 Z
\end{array}\right), \text{ and $Z \in \R^{n\times 2}$}.
\]
We say that a map $u \in \Lip(B_1,\R^n)$ has mean curvature $f \in L^p_{\loc}(B_1,\R^n)$, $p \ge 1$, if:
\begin{numcases}
\displaystyle\partial_i(\sqrt{\det g}g^{ij}\partial_ju^{l}) = \sqrt{\det g}f^l,& \quad $l$ = 1,\dots, $n$,\label{sys01}\\
\displaystyle\partial_i(\sqrt{\det g}g^{ij}) = -\sqrt{\det g}\partial_ju^if^i,& \quad $j$ = 1, 2,\label{sys02}
\end{numcases}
where $(g^{ij}) = (g_{ij})^{-1}$, $g_{ij} =\delta_{ij} + (D u^i,D u^j)$. System \eqref{sys01}-\eqref{sys02} is equivalent to asking that the graph of $u$, i.e. $\Gamma_u = \{(x,u(x)): x \in B_1\}$, has mean curvature bounded in $L^p$ in the sense of varifolds, see for instance \cite[Section 4.2]{SIM}, \cite[Section 6.3]{DLDPKT}.
If we introduce
\begin{equation}\label{Basg}
g(Z) \doteq \id + Z^TZ \text{ and } B(Z) \doteq \sqrt{\det g(Z)}g^{-1}(Z),
\end{equation}
we can rewrite system \eqref{sys01}-\eqref{sys02} in matrix notation:
\begin{numcases}
\displaystyle\dv(DuB(Du)) = \sqrt{\det g}f,\label{sys11}\\
\displaystyle\dv(B(Du)) = -\sqrt{\det g}Du^Tf,\label{sys12}
\end{numcases}
where the divergence is taken row-wise.\ Finally, if we make use of the algebraic identity
\begin{equation}\label{shortexpDA}
D\A(Z) = ZB(Z), \quad \forall Z \in \R^{n\times 2},
\end{equation}
we can finally rewrite \eqref{sys11} as 
\begin{equation}\label{eq:short}
	\dv(D\A(Du)) = h \doteq \sqrt{\det g}f.
\end{equation}
As $u \in \Lip(B_1,\R^n)$, $\det g$ is a measurable function bounded from above and below, hence $h \in L^p(B_1,\R^n)$ if and only if $f \in L^p(B_1,\R^n)$. In the sequel we will find convenient to use $h$ instead of $f$ when performing calculations and estimates.

\subsection{Quasiconformal mappings} We recall some useful properties of quasiconformal mappings.
\begin{Def}
Let $\Omega \subset \R^2$ be open. The map $\varphi\in W^{1,2}(\Omega,\R^2)$ is $(K-)$quasiregular if there exists $K \ge 1$ such that
\begin{equation}\label{Kqc1}
|D\varphi|^2(x) \le K\det(D\varphi(x)), \quad \text{for a.e. $x \in \Omega$.}
\end{equation}
We say that it is $(K-)$quasiconformal if, in addition, $\varphi$ is a homeomorphism\footnote{Note that every quasiregular map is continuous, compare Theorem \ref{thm:quas}\eqref{quasma4}.} between $\Omega$ and the open set $\varphi(\Omega)$.
\end{Def}

\begin{Teo}\label{thm:quas}
Let $\varphi \in W^{1,2}(\Omega_1,\Omega_2)$ be quasiregular and $\psi \in W^{1,2}(\Omega_2,\Omega_3)$ be quasiconformal with $\Omega_3 = \psi(\Omega_2)$. Then:
\begin{enumerate}
\item\label{quasma1} $\psi^{-1}: \Omega_3 \to \Omega_2$ is a $W^{1,2}(\Omega_3,\Omega_2)$ quasiconformal mapping (with the same quasiregularity constant $K$ of $\psi$, compare \eqref{Kqc1}), with gradient
\[
D(\psi^{-1}(y)) = (D\psi)^{-1}(\psi^{-1}(y)), \text{ for a.e. } y \in \Omega_3;
\]
\item\label{quasma2} the chain rule holds: for every $f \in W^{1,2}(\Omega_2)$, $f\circ \varphi \in W^{1,2}(\Omega_1)$ with
\[
D(f\circ\varphi)(x) = Df(\varphi(x))D\varphi(x), \text{ for a.e. } x \in \Omega_1;
\]
\item\label{quasma3} the composition $\psi \circ \varphi$ is quasiregular;
\item\label{quasma4} there exists $p = p(K) > 2$ such that $\varphi \in W^{1,p}_{\loc}(\Omega_1,\Omega_2)$, if $K$ is the quasiregularity constant of \eqref{Kqc1}, with bounds depending solely on the $W^{1,2}$ norm of $\varphi$;
\item\label{quasma5} $\det D\psi\neq 0$ a.e..
\end{enumerate}
\end{Teo}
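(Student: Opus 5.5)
This theorem collects classical facts from planar quasiconformal theory, so I would not reprove everything from scratch. Instead I would assemble the proof from standard results, citing Astala--Iwaniec--Martin and Lehto--Virtanen, and give short arguments only where a Sobolev-level statement needs care. The order is (4), (5), (1), (2), (3), because the higher integrability in (4) is what makes the compositions well defined.

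\emph{Step 1: Gehring--Bojarski higher integrability, item \eqref{quasma4}.} For $\varphi\in W^{1,2}$ satisfying \eqref{Kqc1}, I would first apply the Caccioppoli inequality to $\varphi-\varphi_{B}$. This comes from testing the identity $\int \det(D\varphi)\eta^2 = -\int \varphi\cdot\cof(D\varphi)^T$-type null-Lagrangian expression against a cutoff, which relies on the divergence structure of $\det$. It gives
\[
\int_{B_r}|D\varphi|^2\le K\int_{B_r}\det D\varphi \le \frac{C K}{r}\int_{B_{2r}}|\varphi-\varphi_{B_{2r}}||D\varphi|.
\]
Combining this with Sobolev--Poincar\'e yields a reverse Hölder inequality with exponent $1<2$ on the right. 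Gehring's lemma then gives $D\varphi\in L^p_{\loc}$ for some $p=p(K)>2$, with bounds depending only on $\|D\varphi\|_{L^2}$. Continuity follows from Morrey's embedding, which also justifies the footnote.

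\emph{Step 2: nondegeneracy, item \eqref{quasma5}, and the inverse, item \eqref{quasma1}.} For a homeomorphism $\psi\in W^{1,p}_{\loc}$ with $p>2$, Lusin's condition (N) holds. Together with (N) for the inverse, which follows from the geometric (metric) definition of quasiconformality being symmetric in $\psi$ and $\psi^{-1}$, the area formula shows that $\{\det D\psi=0\}$ has measure zero. This is the point I expect to be the main obstacle. The equivalence between the analytic definition \eqref{Kqc1} and the metric definition, which is needed so that $\psi^{-1}$ is again quasiconformal and in particular $W^{1,2}_{\loc}$, is genuinely nontrivial, and I would cite it (Gehring--Lehto, Väisälä) rather than reprove it. Once $\psi^{-1}$ is known to be differentiable a.e. and to satisfy (N), differentiating $\psi^{-1}\circ\psi=\mathrm{id}$ at points of differentiability gives $D\psi^{-1}(\psi(x))=(D\psi(x))^{-1}$. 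Finally, $|X^{-1}|^2=|X|^2/\det X$ for $2\times 2$ matrices, so $\psi^{-1}$ satisfies \eqref{Kqc1} with the same $K$.

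\emph{Step 3: chain rule and composition, items \eqref{quasma2} and \eqref{quasma3}.} I would approximate $f\in W^{1,2}$ by smooth $f_k$. For smooth $f_k$ the chain rule for $f_k\circ\varphi$ is clear, since $\varphi\in W^{1,2}$ is continuous. For the $L^2$ bound I would use \eqref{Kqc1} and the area formula for the quasiregular map $\varphi$, which is discrete and open with bounded multiplicity on compacts (Reshetnyak's theorem, again cited):
\[
\int|Df_k(\varphi)|^2|D\varphi|^2\le K\int |Df_k(\varphi)|^2\det D\varphi \le KN\int |Df_k|^2 .
\]
This bound is uniform in $k$, so passing to the limit gives $f\circ\varphi\in W^{1,2}$ with the stated formula. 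For \eqref{quasma3}, applying \eqref{quasma2} componentwise gives $D(\psi\circ\varphi)=D\psi(\varphi)D\varphi$. The quasiregularity inequality then follows from submultiplicativity: $|AB|^2\le |A|^2|B|^2\le K_\psi K_\varphi\det A\det B$, so $\psi\circ\varphi$ is quasiregular with constant $K_\psi K_\varphi$.
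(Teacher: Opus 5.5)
Your proposal is correct and follows the same route as the paper, which gives no argument of its own and cites Bojarski--Iwaniec item by item. Your sketches are the standard arguments behind those references: Caccioppoli plus Gehring for (4), condition (N) for $\psi$ and $\psi^{-1}$ plus the area formula for (5) and (1), smooth approximation plus the area formula for (2), and submultiplicativity of $|\cdot|$ for (3).

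Two small corrections. First, for $2\times 2$ matrices $|X^{-1}|^2=|X|^2/(\det X)^2$, not $|X|^2/\det X$; this identity is what turns $|X|^2\le K\det X$ into $|X^{-1}|^2\le K\det X^{-1}$. Second, the multiplicity of a quasiregular map is bounded only on compact sets, so your argument for (2) gives $f\circ\varphi\in W^{1,2}_{\loc}(\Omega_1)$ in general, which is all one can expect when the multiplicity is unbounded. You get the global bound when $\varphi$ is injective, and that is the only case the paper uses.
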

Assertion \eqref{quasma1} can be found in \cite[Theorem 9.1]{BI} and \eqref{quasma2} is the content of \cite[Lemma 9.6]{BI}. \eqref{quasma3} is proved in \cite[Theorem 9.4]{BI}. Property \eqref{quasma4} is true in view of \cite[Theorem 5.1]{BI}, see also \cite{AST} for the sharp result. Finally, \eqref{quasma5} can be found in \cite[Theorem 7.2]{BI}.

\subsection{Complex derivatives}\label{s:cd}

In some points of the proof it is convenient to use holomorphic and anti-holomorphic coordinates. To recall them, let
\[
A = \left(\begin{array}{cc} a & b \\ c & d\end{array}\right).
\]
The conformal and anti-conformal parts are defined as follows, in matrix and complex notation:
\[
A_{co} = \frac{1}{2}\left(\begin{array}{cc} a + d & b-c \\ c-b & a + d\end{array}\right), \quad [A]_{\mathcal{H}} \doteq  \frac{1}{2}[(a + d) + i(c-b)]
\]
and
\[
A_{aco} = \frac{1}{2}\left(\begin{array}{cc} a - d & b+c \\ b+c & d - a\end{array}\right), \quad [A]_{\overline{\mathcal{H}}} \doteq  \frac{1}{2}[(a - d) + i(c+ b)].
\]
In particular, we have the fundamental identity
\[
Az = [A]_{\mathcal H}z + [A]_{\overline{\mathcal H}}\overline{z},
\]
where the left-hand side $Az$ is a matrix multiplication and the right-hand side is a sum of complex products of complex numbers.\ We have the following relations:
\begin{equation}\label{alg}
\det(A) = |[A]_{\mathcal{H}}|^2 - ||[A]_{\overline{\mathcal{H}}}|^2, \quad |A|^2 = 2|[A]_{\mathcal{H}}|^2 + 2|[A]_{\overline{\mathcal{H}}}|^2.
\end{equation}
Let $\Omega \subset \R^2$ be open. For a differentiable function $f:\Omega \to \R$, we can set
\begin{equation}\label{wirts}
\partial_zf = f_z \doteq \frac{1}{2}(\partial_1f - i \partial_2f) \text{ and } \partial_{\overline z}f = f_{\overline z} \doteq  \frac{1}{2}(\partial_1f + i \partial_2f) = \overline{f_z}.
\end{equation}
For $f = (f_1,\,f_2) : \Omega \to \R^2$, we define
\begin{equation}\label{wirtsint}
f_z \doteq \partial_zf_1 + i \partial_zf_2 \text{ and } f_{\overline z} \doteq \partial_{\overline z}f_1 + i \partial_{\overline z}f_2.
\end{equation}
More explicitly, in the latter case we can write:
\begin{equation}\label{wirt}
f_z = [Df]_{\mathcal{H}} = \frac{1}{2}[(\partial_1f_1 + \partial_2f_2) + i(\partial_1f_2 - \partial_2f_1)],\quad f_{\overline z} = [Df]_{\overline{\mathcal{H}}} = \frac{1}{2}[(\partial_1f_1 - \partial_2f_2) + i(\partial_1f_2 + \partial_2f_1)].
\end{equation}
This notation also yields another way to read \eqref{Kqc1}. First of all, assume that $Z \in \R^{2\times 2}$ is a quasiconformal matrix, i.e. that for some $K > 0$
\[
|Z|^2 \le K\det(Z).
\]
Then, due to \eqref{alg}:
\begin{align*}
|Z|^2 \le K\det(Z) \quad \Longleftrightarrow \quad 2| [Z]_{\mathcal{H}}|^2+2| [Z]_{\overline{\mathcal{H}}}|^2 \overset{\eqref{alg}}{\le} K(|[Z]_{\mathcal{H}}|^2-| [Z]_{\overline{\mathcal{H}}}|^2),
\end{align*}
and the latter is true if and only if there exists $\kappa = \kappa(K) \in [0,1)$ such that
\begin{equation}\label{qccomp}
|[Z]_{\overline{\mathcal{H}}}|^2 \le \kappa|[Z]_{\mathcal{H}}|^2.
\end{equation}
Using \eqref{qccomp} we see that $f \in W^{1,2}_{\loc}(\Omega,\R^2)$ is quasiregular if and only if there exists $\kappa \in [0,1)$ such that
\begin{equation}\label{fqccomp}
|f_{\overline z}|^2(z) \le \kappa|f_{z}|^2(z) \text{ for a.e. $z \in \Omega$}.
\end{equation}
We have the following result, which can be checked by hand and that we will use freely in the paper:
\begin{lemma}\label{teccomp}
Let $f,g: \Omega\subset \mathbb R^2 \to \mathbb R^2$ be sufficiently differentiable functions. Then,
\begin{enumerate}
\item $ f_{z\overline z} = f_{\overline z z} = \frac{1}{4}\Delta f$;
\item $(\overline f)_z = \overline{f_{\overline z}}$ and $(\overline f)_{\overline z} = \overline{f_{z}}$;
\item $(fg)_z = f_z g + fg_z$, $(fg)_{\overline z} = f_{\overline z} g + fg_{\overline z}$;
\end{enumerate}
\end{lemma}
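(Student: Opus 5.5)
The statement is Lemma \ref{teccomp}, and all three items follow by direct computation from the definitions \eqref{wirts}, \eqref{wirtsint} and \eqref{wirt}. The plan is to reduce everything to real components. For a map $f = f_1 + i f_2$ into $\R^2 \cong \C$, \eqref{wirtsint} defines $f_z$ as $\partial_z f_1 + i\,\partial_z f_2$. Unwinding the scalar definitions \eqref{wirts}, this says $\partial_z = \frac12(\partial_1 - i\partial_2)$ and $\partial_{\overline z} = \frac12(\partial_1 + i\partial_2)$, acting $\C$-linearly on $f$ viewed as a complex-valued function. The first step is to check this identification against \eqref{wirt}. For instance,
\[
\tfrac12(\partial_1 - i\partial_2)(f_1 + i f_2) = \tfrac12[(\partial_1 f_1 + \partial_2 f_2) + i(\partial_1 f_2 - \partial_2 f_1)],
\]
which matches the formula for $f_z$ in \eqref{wirt}, and similarly for $f_{\overline z}$. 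Once $\partial_z$ and $\partial_{\overline z}$ are recognized as $\C$-linear first-order operators with constant complex coefficients, the lemma becomes standard calculus.

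For (1), both operators have constant coefficients, so for $f \in C^2$ they commute (Schwarz). The composition is
\[
\tfrac14(\partial_1 - i\partial_2)(\partial_1 + i\partial_2) = \tfrac14(\partial_1^2 + \partial_2^2),
\]
applied componentwise, which gives $\frac14\Delta f$.

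For (2), I write $\overline f = f_1 - i f_2$ and use that $\partial_1, \partial_2$ commute with complex conjugation:
\[
(\overline f)_z = \tfrac12(\partial_1 - i\partial_2)\overline f = \overline{\tfrac12(\partial_1 + i\partial_2) f} = \overline{f_{\overline z}}.
\]
The second identity in (2) follows in the same way.

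For (3), $fg$ is interpreted as the complex product. The real derivations $\partial_1, \partial_2$ satisfy the Leibniz rule for complex multiplication, since that multiplication is $\R$-bilinear. Taking the complex linear combinations $\frac12(\partial_1 \mp i\partial_2)$ then preserves the Leibniz rule, because both operators are $\C$-linear.

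The only real obstacle is the bookkeeping in the first step. One must make sure that the vector-valued convention \eqref{wirtsint}, which puts an extra factor $i$ on the second component, is the same as the complex-function convention, so that the operators are genuinely $\C$-linear. This is exactly what the comparison with \eqref{wirt} verifies. For Sobolev maps the identities hold a.e., or distributionally, by density.
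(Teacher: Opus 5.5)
Your proposal is correct and is precisely the direct verification the paper has in mind; the paper gives no written proof, only remarking that the lemma ``can be checked by hand.'' Identifying $\partial_z,\partial_{\overline z}$ through \eqref{wirt} with the $\C$-linear operators $\frac12(\partial_1\mp i\partial_2)$ acting on $f_1+if_2$ is the right bookkeeping. After that, (1)--(3) are routine consequences of Schwarz's theorem, the commutation of real derivatives with conjugation, and the Leibniz rule for the commutative complex product.
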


\section{Proof of Theorem \ref{thm:main}}\label{ProofMain}
The proof is divided into steps. Following \eqref{eq:short}, we consider $u \in \Lip(B_1,\R^n)$ solving
\begin{equation}\label{eq:area}
\dv(D\A(Du)) = h, \quad \text{ for $h \in L^r(B_1,\R^n)$, $r > 2$.}
\end{equation}
 Our goal is to prove that $u \in W^{2,r}_{\loc}$, but it suffices to show that $u \in C^{1}(B_1,\R^n)$. Indeed, denoting $G_z(x) = G(x+z)-G(x)$ for any map $G$ and any $z$, we can consider $g \in W^{1,r}$ with $h = \dv(g)$ and write:
\[
\dv(g_z) = \dv(D\A(Du)(x+z)-D\A(Du)(x)) = \dv\left(\left[\int_0^1D^2\A((1-t)Du(x)+tDu(x+z))dt\right] Du_z\right).
\]
If $u \in C^1$, we can combine \cite[Theorems 4.4-7.3]{GM} to deduce the $W^{2,r}_{\loc}$ regularity of $u$, using that $\mathcal{A}$ fulfills a uniform Legendre-Hadamard condition on compact sets \cite[Lemma 6.7]{RT}.

\medskip

The starting point of the proof is precisely the same as the one of \cite[Proposition 3.3]{HMT}, i.e. as in \cite[Lemma 4.1]{HMT} we start by factorizing $u$ as:
\begin{equation}\label{eq:fact}
u = v\circ \varphi,
\end{equation}
where $v \in W^{1,2}(\varphi(B_1),\R^n)$ is shown to solve an auxiliary elliptic system \eqref{vf1} and $\varphi \in W^{1,2}(B_1,\R^2)$ is a quasiconformal map. The details are contained in Step 1. In Step 2 we will study the regularity of $v$ and of $\psi = \varphi^{-1}$. This exploits auxiliary results that will be proved in the forthcoming sections, Lemma \ref{lem:dett}, Theorem \ref{thm:regv} and Corollary \ref{thm:regp}. From the results of Step $2$, we will infer through \eqref{eq:fact} that $u$ is $C^{1,\alpha}$ outside of a singular set $\psi(S)$. In Step $3$ we will show how to remove this singular set with the help of Theorem \ref{thm:stronguq} and hence how to conclude the proof of the ongoing Theorem \ref{thm:main}.

\medskip

\fbox{Step 1: factorization of $u$.} We show that $u$ can be factorized as \eqref{eq:fact}. Let
\[
g \doteq \id + Du^TDu = \left(\begin{array}{cc} E& F\\ F & G\end{array}\right) \quad \text{ and }\quad  \mu(z) \doteq \frac{E-G+2iF}{E+G+2{\sqrt {EG-F^{2}}}}.
\]
$u \in \Lip(B_1,\R^n)$ implies $\id \le g \le \Lambda \id$ at a.e. point of $B_1$ in the sense of quadratic forms. This yields $|\mu|(z) \le k = k(\Lambda) < 1 \text{ for a.e. }z \in B_1$. Therefore, after extending $\mu$ to $0$ outside $B_1$, we can exploit \cite[Theorem 5.3.4]{AIM} to find a quasiconformal solution $\varphi \in W^{1,2}_{\loc}(\R^2,\R^2)$ to $\varphi_{\bar z} = \mu\varphi_z$ a.e. in $\R^2$.\ Passing to real notation, this is equivalent to:
\begin{equation}\label{gconf}
g = \rho D\varphi^TD\varphi, \quad \text{ with } \rho \doteq \frac{\sqrt{\det(g)}}{\det(D\varphi)}, \quad \text{ a.e. in $B_1$}.
\end{equation}
For details, see \cite[Theorem 10.1.1]{AIM}. Set $U \doteq \varphi(B_1)$, which is open since $\varphi$ is injective.\ As said, $v \in W^{1,2}(U,\R^n)$ defined by $u \circ \varphi^{-1}$ solves an auxiliary elliptic system: we write, for any $\eta \in C^\infty_c(U,\R^n)$,
\begin{equation}\label{vf}
\begin{split}
\int_{U}\langle Dv,D\eta\rangle dy &= \int_{U}\langle Du\circ \varphi^{-1}(D\varphi)^{-1}\circ \varphi^{-1},D\eta\rangle dy \\
&= \int_{B_1}\det(D\varphi)\langle Du(D\varphi)^{-1},D\eta\circ \varphi\rangle dx = \int_{B_1}\det(D\varphi)\langle Du(D\varphi)^{-1},D(\eta\circ \varphi)(D\varphi)^{-1}\rangle dx\\
& =\int_{B_1}\det(D\varphi)\langle Du(D\varphi)^{-1}(D\varphi)^{-T},D(\eta\circ \varphi)\rangle dx \overset{\eqref{gconf}}{=} \int_{B_1}\sqrt{\det(g)}\langle Dug^{-1},D(\eta\circ \varphi)\rangle dx\\
& \overset{\eqref{Basg}-\eqref{shortexpDA}}{=} \int_{B_1}\langle D\mathcal{A}(Du),D(\eta\circ \varphi)\rangle dx \overset{\eqref{eq:area}}{=} -\int_{B_1}(h,\eta\circ \varphi) dx.
\end{split}
\end{equation} 
In the last line we used the quasiconformality of $\varphi$ and Theorem \ref{thm:quas}\eqref{quasma2} to infer $\eta \circ \varphi \in W^{1,2}_0(B_1,\R^n)$, which is an admissible test map for the weak formulation of \eqref{eq:area}. Hence:
\begin{equation}\label{vf1}
\int_{U}\langle Dv,D\eta\rangle dy = -\int_{B_1}( h,\eta \circ \varphi) dx.
\end{equation}
Using the area formula, we see that the latter is just the weak formulation of
\[
	\Delta v = h\circ\psi\det(D\psi),
\]
a form that we will exploit in \eqref{vf111}.

\medskip

\fbox{Step 2: regularity of $v$ and $\psi$.} Bound the right-hand side of \eqref{vf1} through H\"older inequality:
\begin{align*}
\left|\int_{B_1}( h,\eta \circ \varphi) dx\right| \le \|h\|_{r}\|\eta\circ\varphi\|_{r'}.
\end{align*}
We let $\psi \doteq \varphi^{-1}$, which is quasiconformal due to Theorem \ref{thm:quas}\eqref{quasma1}.\ After the change of variables $x = \psi(y)$,
\begin{equation}\label{firprime}
\|\eta\circ\varphi\|_{r'}^{r'} = \int_{B_1}|\eta(\varphi(x))|^{r'}dx = \int_{U}|\eta(y)|^{r'}\det(D\psi)(y)dy.
\end{equation}
Let $p>2$ be such that $D\psi \in L^p_{\loc}(U,\R^2)$, see Theorem \ref{thm:quas}\eqref{quasma4}. We can then bound the previous integral  as:
\begin{equation}\label{seprime}
\int_{U}|\eta(y)|^{r'}\det(D\psi)(y)dy \le \left(\int_{U}|\eta(y)|^{r'\kappa}dy\right)^\frac{1}{\kappa}\left(\int_{\spt (\eta)}\det(D\psi)^\frac{p}{2}dy\right)^\frac{2}{p}.
\end{equation}
Here, $\kappa = \kappa(p)$ is the H\"older conjugate of $p/2$, namely
\[
\kappa(p) \doteq   \frac{p}{p-2}.
\]
Combining \eqref{vf1}-\eqref{firprime}-\eqref{seprime}, we find that
\[
\left|\int_{U}\langle Dv,D\eta\rangle dy\right| \le C\|\eta\|_{L^{r'\kappa}(U,\R^n)},
\]
for some $C$ that depends on $p$, $\psi$ and on $\dist(\spt(\eta), \partial U)$. Therefore, if $\beta$ is the H\"older conjugate of $r'\kappa(p)$:
\begin{equation}\label{eq:gainfi}
D\psi \in L^p_{\loc}(U,\R^2) \;\Longrightarrow\; \Delta v \in L_{\loc}^{\beta}(U,\R^n), \quad \text{for } \beta = (r'\kappa(p))' = \frac{r'p}{(r'-1)p + 2}.
\end{equation}
In addition to this information, we can find another relation between $v$ and $\psi$. Considering \eqref{gconf}, we write
\[
\id + Du^TDu = g = \rho D\varphi^TD\varphi.
\]
Using \eqref{eq:fact} and the chain rule we have, a.e. in $B_1$:
\[
\id + D\varphi^TDv^T\circ\varphi Dv\circ\varphi D\varphi = \rho D\varphi^TD\varphi.
\]
Inverting $D\varphi$ and using that $(D\varphi)^{-1}(x) = D\psi(\varphi(x))$ by Theorem \ref{thm:quas}\eqref{quasma1}, we can write
\[
D\psi^T(\varphi(x))D\psi(\varphi(x)) + Dv^T(\varphi(x)) Dv(\varphi(x)) = \rho \id.
\]
Finally we deduce that, for $a \doteq \rho \circ \psi(y)$:
\begin{equation}\label{eq:yUprel}
D\psi^T(y)D\psi(y) + Dv^T(y) Dv(y) = a(y) \id, \quad \text{for a.e. $y \in U$.}
\end{equation}
This tells us that the traceless parts of the symmetric matrices $D\psi^TD\psi$ and $Dv^TDv$ coincide a.e., up to a sign. Let us introduce\footnote{The Hopf differential arises when considering domain variations for the Dirichlet energy, in the sense that a map $u$ which is critical with respect to domain variations of the Dirichlet energy fulfills $\dv H(Du) = 0$.} the \emph{Hopf differential} operator $H: \R^{k\times 2} \to \R^{2\times 2}$, which yields, for all $Z \in \R^{k\times 2}$, the traceless part of $Z^TZ$:
\begin{equation}\label{defhopf}
H(Z) \doteq Z^TZ - \frac{|Z|^2}{2}\id.
\end{equation}
Here $k$ is any natural number and for us $k = 2$ or $k = n$ but, with a small abuse of notation, we will not denote $H$ differently for different $k$. Relation \eqref{eq:yUprel} then yields
\begin{equation}\label{eq:yU}
H(D\psi)(y) = -H(Dv)(y), \quad \text{for a.e. $y \in U$.}
\end{equation}
One last observation is that $u \in \Lip(B_1,\R^n)$ and the chain rule Theorem \ref{thm:quas}\eqref{quasma2} implies
\begin{equation}\label{eq:compara}
|Dv(y)| = |Du(\psi(y))D\psi(y)| \le C|D\psi(y)| \text{ a.e. in }U. 
\end{equation}
Consider then the system obtained combining \eqref{eq:gainfi}, \eqref{eq:yU} and \eqref{eq:compara}.
In Theorem \ref{thm:regv} we will show that this system implies that the map $v$ actually belongs to $W^{2,2+\eps}_{\loc}(U,\R^n)$, for some $\eps > 0$. Next, in Corollary \ref{thm:regp}, we will infer that also $\psi \in W^{2,2 + \eps}_{\loc}(U,\R^2)$ for a possibly smaller $\eps > 0$. This in particular yields that the gradients of $v$ and $\psi$ are $\alpha$-H\"older continuous. Now the classical inverse function theorem implies that $\varphi = \psi^{-1}$ belongs as well to $C^{1,\alpha}(E,\R^2)$ on the open set
\[
E = B_1\setminus \psi(S), \quad \text{ where } S\doteq \{y \in U: \det(D\psi(y)) = 0 \} = \{y \in U: |D\psi|(y) = 0\},
\]
where equality between definitions of $S$ holds since $\psi$ is quasiregular.\ Since $v \in C^{1,\alpha}(U,\R^n)$, \eqref{eq:fact} readily implies that $u \in C^{1,\alpha}(E,\R^n)$. To conclude that $u \in C^{1,\alpha}(B_1,\R^n)$ we need to show that $S = \emptyset$.
\\
\\
\fbox{Step 3: $S = \emptyset$ and conclusion of the proof.} To show that $S$ is empty, we need to use the claimed regularity of $v$ and $\psi$ to rewrite \eqref{vf1} and \eqref{eq:yU}. We notice in fact that \eqref{vf1} is equivalent to 
\begin{equation}\label{vf11}
\int_{U}\langle Dv,D\eta\rangle dy = -\int_{U}( h\circ\psi,\eta)\det(D\psi) dy
\end{equation}
after the usual change of variables $y = \varphi(x)$. Since $v \in W^{2,2+\eps}_{\loc}(U,\R^n)$, we can actually write \eqref{vf11} in its strong form a.e., denoting $F \doteq h\circ\psi$:
\begin{equation}\label{vf111}
\Delta v = F\det(D\psi).
\end{equation}
A simple change of variables and the fact that $D\psi \in L^\infty_{\loc}$ since $\psi \in W^{2,2+\eps}_{\loc}(U,\R^2)$ implies that
\begin{equation}\label{vfreg}
F\det(D\psi)^\frac{1}{2} \in L^{r}_{\loc}(U,\R^n).
\end{equation}
Since $v \in W^{2,2+\eps}_{\loc}(U,\R^n)$ and $\psi \in W^{2,2+\eps}_{\loc}(U,\R^2)$, we take the divergence of \eqref{eq:yU} pointwise a.e. to get:
\[
D\psi^T\Delta \psi = -Dv^T\Delta v \overset{\eqref{vf111}}{=} -Dv^TF\det(D\psi).
\]
As $D\psi$ is invertible almost everywhere by Theorem \ref{thm:quas}\eqref{quasma5}, we can rewrite the last equation as
\begin{equation}\label{stropsi}
\Delta \psi = -\cof^T(D\psi)Dv^TF, \quad\text{a.e. in }U.
\end{equation}
We can now bound:
\begin{equation}\label{stropsi}
|\Delta \psi| \le C|F||D\psi||Dv| \overset{\eqref{eq:compara}}{\le} C|D\psi|^2|F|,\quad \text{a.e. in }U.
\end{equation}
Setting $g \doteq |D\psi||F|$, we see from \eqref{vfreg} and the fact that $\psi$ is quasiregular that $g \in L^{r}_{\loc}(U)$. Thus, we rewrite the previous equation as:
\begin{equation}\label{stroinpsi}
|\Delta \psi|(y) \le Cg(y)|D\psi|(y), \text{ for a.e. $y$ in }U.
\end{equation}
Adapting an argument of \cite[Theorem 1-2]{HWW} from the case $g \in L^{\infty}_{\loc}(U)$ to the case $g \in L^r_{\loc}(U)$ and $r > 2$, we will show in Appendix \ref{appa} that any non-constant $W^{2,2+\eps}_{\loc}(U,\R^2)$ map $\psi$ fulfilling \eqref{stroinpsi} admits a precise asymptotic expansion at points of $S$. Assuming by contradiction that $S\neq \emptyset$ in our case, we can use this expansion to find contradict the quasiconformality of $\psi$. Let us show this.

\medskip

Assume without loss of generality $0 \in S$ and $\psi(0) = 0$, and up to scaling that $\psi$ is defined in $B_1$.
By Theorem \ref{thm:stronguq}, and using notation \eqref{oO}, we find that
\begin{equation}\label{expan}
\partial_z \psi_1 = a_1z^{k} + o(|z|^{k}) \text{ and } \partial_z\psi_2 = a_2z^{k} + o(|z|^{k}),
\end{equation}
for some $k \ge 1$, $a_1,a_2 \in \C$ with $|a_1|+|a_2| \neq 0$. The map $\psi$ is quasiregular, hence its gradient fulfills \eqref{fqccomp} for $\kappa \in (0,1)$. Making use of \eqref{wirts}, we can write at every point of $B_1$:
\begin{align*}
|\partial_{\overline z}\psi|^2 &= |\partial_{\overline z}\psi_1 + i\partial_{\overline z}\psi_2|^2 \overset{\eqref{wirts}}{=} |\partial_{z}\psi_1 - i\partial_{z}\psi_2|^2 \le \kappa |\partial_{z}\psi|^2 \overset{\eqref{wirts}}{=} \kappa|\partial_{z}\psi_1 + i\partial_{z}\psi_2|^2.
\end{align*}
Substituting \eqref{expan} in this inequality we readily find that
\begin{equation}\label{a1a2}
|a_1 - ia_2|^2\le \kappa |a_1 + ia_2|^2.
\end{equation}
Write, for a $R \in C^1(B_1,\R^2)$:
\begin{equation}\label{expanpsi}
\psi(z) = (a_1 + ia_2)\frac{z^{k + 1}}{k+1} + (\overline{a_1}+i\overline{a_2})\frac{\overline{z}^{k + 1}}{k + 1} + R(z).
\end{equation}
We claim that 
\begin{equation}\label{g}
R(z) = o(|z|^{k + 1}).
\end{equation}
To see this, we can observe that
\[
(a_1 + ia_2)z^k + o(|z|^k) \overset{\eqref{expan}}{=} \partial_z\psi_1(z) + i\partial_z\psi_2(z) \overset{\eqref{wirtsint}}{=} \partial_z \psi(z) \overset{\eqref{expanpsi}}{=} (a_1 + ia_2)z^k + \partial_zR(z).
\]
Thus we deduce $\partial_zR(z) = o(|z|^k)$. An analogous computation yields $\partial_{\overline z}R(z) = o(|z|^k)$. Due to \eqref{alg}, we then have $|DR|(z) = o(|z|^k)$. Since $\psi(0) = 0$, \eqref{expanpsi} yields $R(0) = 0$, and hence \eqref{g} holds. Now define 
\[
\tilde{\psi}(z) \doteq \frac{k + 1}{a_1 + ia_2}\psi(z) \quad \text{ and }\quad \lambda(z) \doteq \frac{\overline{a_1} + i\overline{a_2}}{a_1 + ia_2} + \frac{(k+1)R(z)}{(a_1+ia_2)\overline{z}^{k + 1}}.
\]
Notice that $a_1 + ia_2 \neq 0$ by \eqref{a1a2} and the fact that $|a_1| + |a_2| \neq 0$. Moreover, $\tilde{\psi}$ is still quasiconformal, and, assuming $|z| \le \rho$, we find $\delta > 0$ such that
\begin{equation}\label{rhode}
|\lambda|(z) \le 1-\delta
\end{equation}
by \eqref{a1a2} and \eqref{g}. Thus \eqref{expanpsi} can be rewritten as $\tilde{\psi}(z) = z^{k + 1} +\lambda(z)\overline{z}^{k + 1}$. Consider now the homotopy 
\[
H(t,z) \doteq z^{k + 1} +t\lambda(z)\overline{z}^{k + 1},\quad \text{ so that } H(0,z) = z^{k + 1} \text{ and } H(1,z) = \tilde{\psi}(z).
\]
Pick any regular value $y \in \tilde{\psi}(B_1)$ for $\tilde\psi$ with $0 < |y| \le \frac{\delta}{2}\rho^{k + 1}$.
%and
%\begin{equation}\label{regpsi}
%\det(D\tilde{\psi}(\tilde{\psi}^{-1}(y))) > 0.
%\end{equation}
Such $y$ exists since $\tilde{\psi}$ is a homeomorphism with $\tilde\psi(0) = 0$.\
% For such $y$, we have
%\[
%\deg(z^{k + 1},y,B_\rho) = k + 1,
%\]
%where $\deg$ is the Brouwer degree of a map, see \cite[Section 5]{MILN}. 
Our choice of $|y|$ implies that $y \notin H(t,\partial B_\rho)$ for any $t \in [0,1]$.\ Thus, the homotopy invariance of the Browder degree (see \cite[Section 5]{MILN} for reference) yields
\begin{equation}\label{degcon}
\deg(\tilde{\psi},y,B_\rho) =\deg(z^{k + 1},y,B_\rho) = k + 1 > 1,
\end{equation}
but, since $\tilde\psi$ is quasiconformal, $\deg(\tilde{\psi},y,B_{\rho})$ is either $1$ or 0, depending on whether $\tilde{\psi}^{-1}(y) \in B_\rho$ or not. In either case, we find a contradiction with \eqref{degcon} and we conclude the proof.

\begin{rem}
An alternate approach to Step 3 that avoids using the precise asymptotic expansion of $\psi$ is to use that $S$ consists of isolated points (also a consequence of Theorem \ref{thm:stronguq}). In particular, $u$ is $C^{1,\,\alpha}$ away from a set of isolated points. It follows that $u$ solves (\ref{sys01})-(\ref{sys02}) since isolated points are removable for this system, and one can then conclude using \cite[Theorem 4.4]{DUG}. However, it is interesting that one can show that $S$ is empty using the full strength of the expansion, so we have chosen to do so in the course of the proof.
\end{rem}

\section{An algebraic inequality}\label{HopfEllipticity}

In the proof of Theorems \ref{thm:regv}-\ref{thm:regp}, we need an inequality of algebraic nature which quantifies the ellipticity of the Hopf differential \eqref{defhopf} when restricted to quasiconformal matrices.\ This regularizing property of the Hopf differential is also at the core of \cite{HEL}. Although the proof is different and much shorter in this case, this section is analogous to \cite[Section 6]{HMT}, and is inspired by the work of V. \v Sver\'ak \cite[Theorem 3]{SAP} (see also \cite[Theorem 6.5]{RT} for further applications of these ideas).\ The main difference between \cite[Section 6]{HMT} and the situation at hand is that in \cite{HMT} we were interested in ellipticity properties of the inner variations for the area functional, while here we study similar properties of the Hopf differential, which arises as the matrix field associated to inner variations of the Dirichlet energy.\ We refer the reader to \cite[Lemma 4.6]{Guerra2026} and \cite[Section 9.1]{Iwaniec2013} for analogous computations.
\\
\\
Before stating the inequality, we rewrite the Hopf differential using complex notation:
\begin{equation}\label{eqH}
[H(X)]_{\mathcal{H}} = 0 \text{ and } [H(X)]_{\overline{\mathcal{H}}}= 2\overline{[X]_\mathcal{H}}[X]_{\overline{\mathcal{H}}}, \quad \text{ for any } X \in \R^{2\times 2}.
\end{equation}
The main result of this section is following:
\begin{lemma}\label{lem:dett}
There exists $C > 0 $ depending on $K > 0$ such that:
\begin{equation}\label{sptnull}
|X|^2 \le K \det X, |Y|^2 \le K\det Y\; \Longrightarrow\; |X-Y|^2 \le C\det(X-Y) +C\frac{|H(X)-H(Y)|^2}{\max\{|X|,|Y|\}^2}.
\end{equation}

\end{lemma}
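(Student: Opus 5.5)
We would prove Lemma \ref{lem:dett} in the complex notation of Section \ref{s:cd}, where it reduces to a short inequality between complex numbers. Write $a \doteq [X]_{\mathcal H}$, $b \doteq [X]_{\overline{\mathcal H}}$, $c \doteq [Y]_{\mathcal H}$, $d \doteq [Y]_{\overline{\mathcal H}}$.

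\textbf{Translating the statement.} By \eqref{qccomp}, the hypotheses become $|b|^2 \le \kappa|a|^2$ and $|d|^2 \le \kappa |c|^2$, with $\kappa = \kappa(K) \in [0,1)$. The maps $X \mapsto [X]_{\mathcal H}$ and $X \mapsto [X]_{\overline{\mathcal H}}$ are linear, so \eqref{alg} gives
\[
\det(X-Y) = |a-c|^2 - |b-d|^2, \qquad |X-Y|^2 = 2|a-c|^2 + 2|b-d|^2 .
\]
By \eqref{eqH}, $|H(X) - H(Y)|$ is comparable (up to a universal constant) to $E \doteq |\overline a b - \overline c d|$. Also $|X|^2 \le 2(1+\kappa)|a|^2$ and $|X|^2 \ge 2|a|^2$, and similarly for $Y$. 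Hence $\max\{|X|,|Y|\}$ is comparable to $M \doteq \max\{|a|,|c|\}$, with constants depending only on $K$. If $M = 0$ then $X = Y = 0$ and there is nothing to prove.

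\textbf{Key reduction.} It suffices to find $\theta = \theta(K) < 1$ and $C' = C'(K)$ such that
\begin{equation}\label{planred}
|b-d|^2 \le \theta |a-c|^2 + C' \frac{E^2}{M^2}.
\end{equation}
Indeed, \eqref{planred} gives $(1-\theta)\bigl(|a-c|^2+|b-d|^2\bigr) \le \det(X-Y) + 2C'E^2/M^2$ after adding $\theta|b-d|^2$ to both sides and rearranging. This yields \eqref{sptnull} with $C$ of order $(1-\theta)^{-1}$.

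\textbf{Proof of \eqref{planred}.} By the symmetry $(a,b) \leftrightarrow (c,d)$ we may assume $M = |a| \ge |c|$. We split
\[
\overline a b - \overline c d = \overline a (b-d) + (\overline a - \overline c) d .
\]
Hence
\[
|a||b-d| \le E + |a-c||d| \le E + \sqrt\kappa\,|c|\,|a-c| \le E + \sqrt\kappa\, |a|\,|a-c|,
\]
and dividing by $|a| = M$ gives $|b-d| \le E/M + \sqrt\kappa\,|a-c|$. Squaring with Young's inequality, for any $\epsilon > 0$,
\[
|b-d|^2 \le (1+\epsilon)\kappa|a-c|^2 + C_\epsilon E^2/M^2 .
\]
Choosing $\epsilon$ so that $\theta \doteq (1+\epsilon)\kappa < 1$ gives \eqref{planred}.

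\textbf{Main difficulty.} The only delicate point is the choice of splitting. We must factor out the conjugate of the conformal part of the larger of the two matrices. This makes the coefficient $|d|/|a|$ of the error term at most $\sqrt\kappa$, strictly below $1$, and this is exactly where quasiconformality of both $X$ and $Y$ enters. The rest is bookkeeping of constants via \eqref{alg} and \eqref{eqH}.
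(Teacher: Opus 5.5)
Your argument is correct and is essentially the paper's proof. You use the same reduction to an estimate $|b-d|^2\le\theta|a-c|^2+C'E^2/M^2$ with $\theta=\theta(K)<1$, proved by the same splitting $\bar a b-\bar c d=\bar a(b-d)+(\bar a-\bar c)d$ and Young's inequality. The only difference is that you normalize by the larger conformal part $|a|\ge|c|$, which gives $|d|\le\sqrt{\kappa}\,|a|$ at once, whereas the paper orders $|Y|\le|X|$ and must derive $|[Y]_{\overline{\mathcal H}}|^2\le\kappa|[X]_{\mathcal H}|^2$ separately in \eqref{kappa}.

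There is one arithmetic slip in your reduction step. Your key estimate gives $(1-\theta)\bigl(|a-c|^2+|b-d|^2\bigr)\le(1+\theta)\det(X-Y)+2C'E^2/M^2$, not the version with coefficient $1$ in front of $\det(X-Y)$, which does not follow since $\det(X-Y)$ may be negative. The corrected inequality still yields \eqref{sptnull} with constants of order $(1-\theta)^{-1}$.
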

\begin{proof}
The constant $C$ appearing in the next computations will always depend solely on $K$ and can change line-by-line. We start by noticing that, by symmetry, we can assume that $|Y|\le |X|$ and, in addition, that
\begin{equation}\label{YlessX} 
0 <|Y| \le |X|.
\end{equation}
Indeed if not $Y = 0$ and \eqref{sptnull} is true simply by quasiconformality of $X$, up to taking $C \ge K$.
\\
\\
We start by reducing the proof of \eqref{sptnull} to the proof of a similar inequality. To state it, we need to use the complex coordinates introduced in Section \ref{s:cd}. For simplicity, let us denote 
\[
a \doteq [X]_{\mathcal{H}},\;a' \doteq [X]_{\overline{\mathcal{H}}},\; b \doteq [Y]_{\mathcal{H}} ,\;b' \doteq  [Y]_{\overline{\mathcal{H}}}.
\] 
We claim that there exists $C = C(K)> 0$ and $\lambda = \lambda(K) \in [0,1)$ such that
\begin{equation}\label{totalc}
|a'-b'|^2 \le \lambda|a-b|^2 + C\frac{|\overline{a}a' - \overline{b}b'|^2}{|a|^2}.
\end{equation}
Let us first show how the claim implies the Lemma. Notice that
\begin{equation}\label{qccomp1}
\max\{|X|^2,|Y|^2\} = |X|^2 \overset{\eqref{alg}}{=} 2|a|^2 + 2|a'|^2 \overset{\eqref{qccomp}}{\le} C|a|^2.
\end{equation}
We fix $T = T(K)> 0$ large enough so that
\begin{equation}\label{T}
(1-T) +\lambda(1+T) < 0.
\end{equation}
We can do so since $\lambda = \lambda(K)$ appearing in \eqref{totalc} belongs to $[0,1)$. We can then compute:
\begin{align*}
2^{-1}|X-Y|^2 &\overset{\eqref{alg}}{=} |a - b|^2 +|a'-b'|^2 \\
&\overset{\eqref{alg}}{=} (1-T)|a-b|^2 +(1+T)|a'-b'|^2 + T\det(X-Y) \\
&\overset{\eqref{totalc}}{\le} (1-T)|a-b|^2 +\lambda(1+T)|a-b|^2 + T\det(X-Y) + (1+T)C\frac{|\overline{a}a' -\overline{b}b'|^2}{|a|^2} \\
&\overset{\eqref{T}}{\le} T\det(X-Y) + (1+T)C\frac{|\overline{a}a' - \overline{b}b'|^2}{|a|^2} \overset{\eqref{eqH}-\eqref{qccomp1}}{\le} T\det(X-Y) + C\frac{|H(X) - H(Y)|^2}{\max\{|X|,|Y|\}^2}
\end{align*}
This shows the validity of \eqref{sptnull} assuming \eqref{totalc}. Now, as a final step, we show \eqref{totalc}. Using the fact that $Y \neq 0$, \eqref{YlessX}, we can estimate for an $\eps = \eps(K) > 0$ to be fixed later,
\begin{equation}\label{a'b'}
\begin{split}
|a' - b'|^2  &= \left|\frac{a'\overline{a}}{\overline{a}} -\frac{b'\overline{b}}{\overline{b}}\right|^2 = \left|\frac{a'\overline{a} -b'\overline{b}}{\overline{a}} 
+ \left(\frac{1}{\overline{a}}-\frac{1}{\overline{b}}\right)b'\overline{b}\right|^2 \\
&\le \left(1+\frac{1}{\eps^2}\right)\left|\frac{a'\overline{a} -b'\overline{b}}{\overline{a}}\right|^2 +(1 +\eps^2)|a-b|^2\frac{|b'|^2}{|a|^2}.
\end{split}
\end{equation}
By \eqref{YlessX} and \eqref{alg}, we have that $|b'|^2 + |b|^2 \le |a'|^2 + |a|^2$. Using \eqref{qccomp} in this inequality, we find
\begin{equation}\label{kappa}
(1 + \kappa^{-1})|b'|^2 \le (1+\kappa)|a|^2\; \Rightarrow\; |b'|^2 \le \kappa |a|^2.
\end{equation}
Let us remark once again that $\kappa = \kappa(K) \in (0,1)$. We can then find $\eps = \eps(K) > 0$ such that $\lambda \doteq (1+ \eps^2)\kappa < 1$. For any such $\eps>0$, we see that \eqref{a'b'} implies \eqref{totalc} through \eqref{kappa} and we conclude the proof.
\end{proof}

\section{Auxiliary Results}\label{Bootstrapping}

\begin{Teo}\label{thm:regv}
Let $r > 2$ and let $v \in W^{1,2}(B_1,\R^n)$, $\psi \in W^{1,2}(B_1,\R^2)$, $f \in L^r(B_1,\R^n)$. Assume $\psi$ is quasiconformal and that the maps are linked through \eqref{eq:gainfi}, \eqref{eq:yU} and \eqref{eq:compara} for all $p > 2$, namely:
\begin{numcases}
\displaystyle D\psi \in L^p_{\loc}(B_1,\R^2) \Rightarrow \Delta v \in L_{\loc}^{\beta}(B_1,\R^n), \quad \text{for } \beta = \beta(p) = \frac{r'p}{(r'-1)p + 2}, \label{feB}\\
\displaystyle H(D\psi) = -H(Dv) \text{ a.e. in }B_1, \label{seB}\\
\displaystyle |Dv(y)| \le C|D\psi(y)| \text{ a.e. in }B_1.\label{thB}
\end{numcases}
Then, there exists $\eps > 0$ such that $v \in W^{2,2+\eps}_{\loc}(B_1,\R^n)$.
\end{Teo}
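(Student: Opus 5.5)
The plan is a bootstrap on the integrability exponent $p$ of $D\psi$. Hypothesis \eqref{feB} turns $D\psi\in L^p$ into second-order information on $v$. Lemma \ref{lem:dett}, combined with \eqref{seB}--\eqref{thB}, transfers fractional differentiability of $Dv$ back to $D\psi$, and Sobolev-type embeddings then improve $p$. We start from some $p_0>2$, which exists by Theorem \ref{thm:quas}\eqref{quasma4}, and stop once $\beta(p)\ge 2$. At that point one more application of \eqref{feB} with $p$ large gives $\Delta v\in L^{2+\eps}_{\loc}$, and Calder\'on--Zygmund estimates give $v\in W^{2,2+\eps}_{\loc}$.

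\textbf{Transfer step.} Fix a shift $h$, write $G_h(y)=G(y+h)-G(y)$, and apply Lemma \ref{lem:dett} with $X=D\psi(y+h)$ and $Y=D\psi(y)$, which are both $K$-quasiconformal. By \eqref{seB} we have $H(D\psi)_h=-H(Dv)_h$. Since $H$ is quadratic, $|H(Dv)_h|\le (|Dv(y+h)|+|Dv(y)|)|Dv_h|$, and by \eqref{thB} this is at most $C\max\{|X|,|Y|\}\,|Dv_h|$. Hence, pointwise a.e.,
\[
|D\psi_h|^2\le C\det(D\psi_h)+C|Dv_h|^2 .
\]
Now multiply by $\eta^2$ for a cutoff $\eta$ and integrate. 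The determinant is a null Lagrangian, so integrating by parts gives $\int\eta^2\det(D\psi_h)\le C\int\eta|D\eta||\psi_h||D\psi_h|$. Absorbing $\delta\int\eta^2|D\psi_h|^2$ on the left, we obtain
\[
\int\eta^2|D\psi_h|^2\le C\int\eta^2|Dv_h|^2+C|h|^2\|D\psi\|_{L^2}^2 .
\]
So any Nikolskii regularity $Dv\in B^{s}_{2,\infty,\loc}$ with $s\le1$ passes to $D\psi\in B^{s}_{2,\infty,\loc}$.

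\textbf{Iteration.} If $D\psi\in L^p_{\loc}$, then \eqref{feB} gives $\Delta v\in L^{\beta}_{\loc}$, hence $Dv\in W^{1,\beta}_{\loc}$. If $\beta<2$, this space embeds in $B^{s}_{2,\infty,\loc}$ with $s=2-2/\beta$. By the transfer step, $D\psi\in B^s_{2,\infty,\loc}$, which embeds in $L^{q}_{\loc}$ for every $q<2\beta/(2-\beta)$. In terms of $x=1/p$, the new exponent satisfies $1/q=1/\beta-1/2+\text{(arbitrarily small loss)}$, and $1/\beta=1/r+2/(r'p)$. A direct computation gives
\[
x_{\rm new}-\tfrac12=\tfrac{2}{r'}\big(x-\tfrac12\big)+o(1), \qquad \tfrac{2}{r'}=2-\tfrac2r>1 .
\]
Since $x_0<1/2$, the gap $\tfrac12-x$ grows geometrically, provided the losses are chosen small enough (say a fixed fraction of the current gap). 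After finitely many steps $\beta(p)\ge2$. In fact we may take $p$ as large as we like, because $\beta(p)\to r>2$ as $p\to\infty$. This concludes the argument.

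\textbf{Expected obstacle.} The main difficulty is making the transfer step rigorous with the right spaces. Lemma \ref{lem:dett} is only an $L^2$-type estimate, with the quadratic weight $\max\{|X|,|Y|\}^2$ cancelled exactly by \eqref{thB}. This forces us to pass through $L^2$-based fractional spaces rather than $W^{1,\beta}$ directly. We must check that the embeddings $W^{1,\beta}\subset B^{2-2/\beta}_{2,\infty}$ and $B^{s}_{2,\infty}\subset L^{q}$ for $q<2/(1-s)$ hold locally with cutoffs, and that the small exponent losses do not destroy the geometric gain. If needed, I would instead run the same computation with Sobolev--Slobodeckij seminorms, or localize with $\eta$ at each step on shrinking balls.
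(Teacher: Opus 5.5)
Your proposal is correct and is essentially the paper's proof. It uses the same transfer estimate $\int\eta^2|D\psi_h|^2\le C\int\eta^2|Dv_h|^2+C|h|^2$, obtained from Lemma \ref{lem:dett}, \eqref{seB}--\eqref{thB} and the null-Lagrangian property of the determinant, and the same Nikolskii exponent $2-2/\beta$, which the paper gets by H\"older interpolation between $L^\beta$ and $L^{\beta^*}$ rather than by citing a Besov embedding. It reaches the same gain $D\psi\in L^{\beta^*-\delta}_{\loc}$ through fractional Sobolev embedding. Your affine recursion $1/p_{\mathrm{new}}-\tfrac12=\tfrac{2}{r'}\bigl(1/p-\tfrac12\bigr)+o(1)$ is just a cleaner bookkeeping of the paper's inequalities \eqref{eq:p} and its supremum argument for $\sigma$ in \eqref{sup}.
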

\begin{comment}
\begin{rem}
The previous result remains true even dropping assumption \eqref{thB}. However, this slightly shortens the proof and we have chosen to keep the assumption since this statement is anyway sufficient for our purpose, see \eqref{eq:compara}. However, we do not know whether Theorem \ref{thm:regp} is true without it.
\end{rem}
\end{comment}
\begin{proof}
The proof is based on an iterative argument.\ Namely, starting with a given regularity  $D\psi \in L^{p_i}_{\loc}(B_1,\R^2)$ for some $p_i > 2$, we can employ \eqref{feB} to deduce higher regularity of $v$. From \eqref{seB}, Lemma \ref{lem:dett} and a few additional observations we can then infer $D\psi \in L^{p_{i + 1}}_{\loc}(B_1,\R^2)$ with $p_{i + 1} > p_i$, compare \eqref{eq:p}. The goal is to show that, for sufficiently large $i$, $\beta(p_i) > 2$ and conclude the assertion through \eqref{feB} and standard Calder\'on-Zygmund theory. The iterative argument is hidden in the definition of $\sigma$ of \eqref{sup}, but it is the underlying idea of the proof. We start by describing one step of the iteration.

\medskip

Assume $D\psi \in L^p_{\loc}(B_1,\R^2)$ for some $p > 2$. The existence of a starting $p$ for which this is true is guaranteed by Theorem \ref{thm:quas}\eqref{quasma4}. We notice preliminarily that
\begin{equation}\label{bdend}
p > \frac{4}{2-r'}\quad \Longrightarrow\quad \beta(p) > 2.
\end{equation}
Thus we can, without loss of generality, assume that $2 < p <\frac{4}{2-r'}$.

\medskip

Consider $\eta \in C^{\infty}_c(B_1)$ with $\eta \equiv 1$ on $B_{\rho'}$ and $\spt(\eta)\subset B_\rho \subset B_1$.\ We employ \eqref{sptnull} with $X = D\psi(x+h)$ and $Y = D\psi(x)$, where $|h| <(1-\rho)/2$.\ For any map $f$, we set $f_h(x) \doteq f(x+h)-f(x)$.\ Since the determinant is a Null Lagrangian (see \cite[Theorem 2.3(ii)]{DMU}):
\begin{equation}\label{eq:routine0}
0 = \int_{B_1}\det(D(\eta(x)\psi_h(x)))dx = \int_{B_1}\eta^2\det(D\psi_h)dx + 2\int_{B_1}\eta\langle\psi_h\otimes D\eta, \cof^T(D\psi_h)\rangle dx.
\end{equation}
Using \eqref{sptnull} and routine estimates, see for instance \cite[Proposition 6.6]{RT}, this leads to
\begin{equation}\label{eq:routine}
\int_{B_1}\eta^2|D\psi_h|^2dx \le C\int_{B_1}|\eta||\psi_h||D\eta||D\psi_h| dx + C\int_{B_1}\eta^2\frac{|H(D\psi(x+h))-H(D\psi(x))|^2}{\max\{|D\psi(x+h)|,|D\psi(x)|\}^2}dx.
\end{equation}
In what follows, $C$ is a constant which depends on $p$ and on $\rho$ and is independent of $h$, and will change line-by-line. We observe that every component of $H(X)$ is a quadratic form, compare \eqref{defhopf}. Thus:
\begin{equation*}\label{eq:quadra}
\begin{split}
|H(D\psi(x+h))-H(D\psi(x))|^2& \overset{\eqref{seB}}{=} |H(Dv(x+h))-H(Dv(x))|^2 \\
&\le C\max\{|Dv|(x+h),|Dv|(x)\}^2|Dv_h|^2(x)\\
&\overset{\eqref{thB}}{\le} C\max\{|D\psi|(x+h),|D\psi|(x)\}^2|Dv_h|^2(x),
\end{split}
\end{equation*}
where the first inequality is true precisely due to the fact that $H$ is a quadratic form.\ This and Young's inequality allow us to estimate \eqref{eq:routine} as
%\[
%\int_{B_1}\eta^2|D\psi_h|^2dx \le 2\int_{B_1}|\eta||\psi_h||D\eta||D\psi_h| dx + C\int_{B_1}\eta^2|Dv_h|^2(x)dx,
%\]
%and, after Young's inequality, as
\begin{equation}\label{vpsi}
\int_{B_1}\eta^2|D\psi_h|^2dx \le C\int_{B_1}|D\eta|^2|\psi_h|^2 dx + C\int_{B_1}\eta^2|Dv_h|^2(x)dx.
\end{equation}
As $\psi \in W^{1,2}(B_1,\R^2)$, we can bound the first addendum simply by:
\begin{equation}\label{eq:routine1}
\int_{B_1}\eta^2|D\psi_h|^2dx \le C|h|^2 + C\int_{B_1}\eta^2|Dv_h|^2(x)dx.
\end{equation}
Estimating the second addendum requires more work.\ By \eqref{feB}, $v \in W_{\loc}^{2,\beta}(B_1,\R^n)$. Set $f^h(x) \doteq \eta(x)|Dv_h|(x)$ and choose
%We can interpolate its $L^2$ norm choosing as endpoints $L^{\beta}$ and $L^{\beta^*}$, where $\beta^*$ is the Sobolev exponent of $\beta < 2$. 
%Namely, choosing 
$\theta \in (0,1)$ as
\begin{equation}\label{theta}
	\theta \doteq \frac{4}{\beta}\left[1-\frac{1}{\beta}\right].
\end{equation}
We have $2= \theta \beta + (1-\theta)\beta^*$ and, using H\"older's inequality, we get:
\begin{equation}\label{fh}
\int_{B_1}|f^h|^2(x) dx \le \left(\int_{B_1}|f^h|^\beta(x) dx\right)^{\theta}\left(\int_{B_1}|f^h|^{\beta^*}(x) dx\right)^{1-\theta} \le C\left(\int_{B_1}|f^h|^\beta(x) dx\right)^{\theta},
\end{equation}
where here
\[
C = \sup_{h \in B_{\frac{1-\rho}{2}}}\left(\int_{B_1}|f^h|^{\beta^*}(x) dx\right)^{1-\theta} < + \infty.
\]
As $Dv \in W_{\loc}^{1,\beta}(B_1,\R^n)$, from \eqref{fh} we can then estimate \eqref{eq:routine1} as:
%\[
%\int_{B_1}|f^h|^2(x) dx \le C |h|^{\theta\beta}.
%\]
\begin{equation}\label{eq:routine2}
\int_{B_{\rho'}}|D\psi_h|^2dx \le \int_{B_1}\eta^2|D\psi_h|^2dx \le C |h|^{\theta\beta}.
\end{equation}
This means that $D\psi$ belongs to the local version of Besov space $B^{\frac{\theta \beta}{2},2}_\infty$, and we could already invoke embedding theorems concerning Besov spaces.\ Let us, instead, work with more classical fractional Sobolev spaces, which we denote by $H^t = W^{t,2}$.\ \eqref{eq:routine2} implies that $D\psi \in H^{t}(B_{\rho'},\R^2)$ for all $t < t_0 \doteq \frac{\beta\theta}{2}$. Indeed, 
%
%we have for a.e. $x \in B_{\rho'}$:
%\begin{align*}
%\int_{B_{\frac{1-\rho}{2}}(x)}\frac{|D\psi(x) - D\psi(y)|^2}{|x-y|^{2(1+t)}}dy = \int_{B_{\frac{1-\rho}{2}}}\frac{|D\psi(x) - D\psi(x+h)|^2}{|h|^{2(1+t)}}dh,
%\end{align*}
%where we have used the change of variables $y = x +h$. Thus,
\begin{align*}
\int_{B_{\rho'}\times B_{\rho'}}\frac{|D\psi(x) - D\psi(y)|^2}{|x-y|^{2(1+t)}}dxdy &= \int_{B_{\rho'}}\int_{B_{\rho'}}\frac{|D\psi(x) - D\psi(y)|^2}{|x-y|^{2(1+t)}}dydx \\
&= \int_{B_{\rho'}}\int_{B_{\frac{1-\rho}{2}}(x)}\frac{|D\psi(x) - D\psi(y)|^2}{|x-y|^{2(1+t)}}dydx \\
&\quad + \int_{B_{\rho'}}\int_{B_{\rho'}\setminus B_{\frac{1-\rho}{2}}(x)}\frac{|D\psi(x) - D\psi(y)|^2}{|x-y|^{2(1+t)}}dydx\\
& \le \int_{B_{\rho'}}\int_{B_{\frac{1-\rho}{2}}}\frac{|D\psi(x) - D\psi(x+h)|^2}{|h|^{2(1+t)}}dhdx + C\|D\psi\|_{2}^2.
\end{align*}
The last integral is finite due to \eqref{eq:routine2} and our choice of $t_0$. Thus, $D\psi \in H^{t}(B_{\rho'},\R^2)$, see \cite[Section 2]{HITCH}. From the fractional Sobolev embeddings \cite[Theorem 6.5]{HITCH} we find that then $D\psi \in L_{\loc}^\tau(B_{\rho'},\R^2)$ for $\tau = \frac{4}{2-2t} = \frac{2}{1-t}.$ In other words, if $\tau_0 \doteq \frac{2}{1-t_0}$ then, due to the arbitrariness of $\rho> 0$,
\[
D\psi \in L_{\loc}^{\tau_0 - \delta}(B_1,\R^2), \quad \forall \delta > 0.
\]
We can write more explicitly
\begin{equation}\label{tau0}
\frac{1}{\tau_0} =\frac{1}{2} -\frac{t_0}{2} = \frac{1}{2} -\frac{\beta\theta}{4} \overset{\eqref{theta}}{=}  \frac{1}{2} - \left[1-\frac{1}{\beta}\right] = \frac{1}{\beta} - \frac{1}{2} = \frac{1}{\beta^*}, \text{ i.e. } \tau_0 = \beta^* \overset{\eqref{feB}}{=} \frac{2r'p}{4 +(r'-2)p}.
\end{equation}
By hand, we can check that:
%\[
%2-\beta = 2 -  \frac{r'p}{(r'-1)p + 2} =  \frac{2(r'-1)p + 4-r'p}{(r'-1)p + 2} = \frac{r'p+ 2(2-p)}{(r'-1)p + 2}
%\]
%Hence
%\[
%\tau_0 = \beta^*  =\frac{2r'p}{r'p+ 2(2-p)}.
%\]
%Finally, we must study:
%\[
%\tau_0 - p = p\left(\frac{(2-r')(p-2)}{4 +(r'-2)p}\right).
%\]
%The numerator in the last expression is positive since $r > 2$ and $p>2$, and the denominator is positive since $p < \frac{4}{2-r'}$, compare \eqref{bdend}. 
%In other words,
\begin{equation}\label{eq:p}
\tau_0(p) \overset{\eqref{bdend}}{>} p,\; \forall p \in \left(2,\frac{4}{2-r'}\right) \;\; \text{ and } \;\; \tau_0(p) > \frac{4}{2-r'}, \; \forall   p \in \left(\frac{8}{4-(r')^2},\frac{4}{2-r'}\right).
\end{equation}
%We also have that
%\begin{equation}\label{eq:4}
%\tau_0(p) > \frac{4}{2-r'}, \quad \forall   p > \frac{8}{4-(r')^2}.
%\end{equation}
Let now
\begin{equation}\label{sup}
\sigma \doteq \sup\left\{s>1: D\psi \in L^s_{\loc}(B_1,\R^2)\right\}.
\end{equation}
As remarked above, $\sigma > 2$ due to Theorem \ref{thm:quas}\eqref{quasma4}. We claim 
\begin{equation}\label{eq:sigma}
\sigma >  \frac{8}{4-(r')^2}.
\end{equation}
If this is not the case, then
\[
\sigma \le \frac{8}{4-(r')^2} = \frac{2}{2 + r'}\frac{4}{2-r'} < \frac{4}{2-r'}.
\]
Thus one could repeat the reasoning above with $p < \sigma$ but very close to $\sigma$, and use the first inequality of \eqref{eq:p} to get a contradiction with the fact that $\sigma$ is a supremum.\ Since \eqref{eq:sigma} is verified, we can repeat once again the reasoning above with any $p \in\left(\frac{8}{4-(r')^2},\frac{4}{2-r'}\right)$ for which $D\psi \in L^p_{\loc}(B_1,\R^2)$ to obtain that $\sigma > \frac{4}{2-r'}$ by the second inquality of \eqref{eq:p} and conclude using \eqref{bdend}.
\end{proof}

\begin{Cor}\label{thm:regp}
Under the same assumptions of Theorem \ref{thm:regv}, it holds $\psi \in W^{2,2+\eps}_{\loc}(B_1,\R^2)$ for some $\eps>0$.
\end{Cor}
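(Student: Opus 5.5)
Using Theorem \ref{thm:regv}, we already know $v \in W^{2,2+\eps}_{\loc}(B_1,\R^n)$. In particular $Dv \in W^{1,2+\eps}_{\loc}$, and by Morrey $Dv \in C^{0,\alpha}_{\loc}$, so $Dv$ is locally bounded. The plan is to transfer this regularity to $\psi$ through the difference-quotient estimate \eqref{vpsi}, which was derived in the proof of Theorem \ref{thm:regv} using only Lemma \ref{lem:dett}, \eqref{seB} and \eqref{thB}.

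The first step is to get $\psi \in W^{2,2}_{\loc}$. From \eqref{vpsi} we have
\[
\int \eta^2|D\psi_h|^2 \le C\int |D\eta|^2|\psi_h|^2 + C\int \eta^2 |Dv_h|^2 \le C|h|^2.
\]
The first term is bounded because $\psi \in W^{1,2}$. The second is bounded because $v \in W^{2,2}_{\loc}$, which follows from $W^{2,2+\eps}_{\loc}$. Dividing by $|h|^2$ and letting $h\to 0$, the standard difference-quotient characterization gives $D\psi \in W^{1,2}_{\loc}$. As a byproduct, $D\psi \in L^q_{\loc}$ for every $q<\infty$.

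The second step upgrades this to $W^{2,2+\eps}$. Since $\psi,v \in W^{2,2}_{\loc}$, we may differentiate \eqref{seB} a.e., exactly as in Step 3 of Section \ref{ProofMain}. Taking the row-wise divergence of $H(D\psi)=-H(Dv)$ gives
\[
D\psi^T\Delta\psi = -Dv^T\Delta v \quad \text{a.e.},
\]
because $\dv H(Z)=Z^T\Delta z$ for $W^{2,2}$ maps. By Theorem \ref{thm:quas}\eqref{quasma5} and quasiconformality, $|D\psi^T \xi| \ge c|D\psi||\xi|$ for every vector $\xi$. Together with \eqref{thB}, this yields
\[
|\Delta\psi| \le \frac{|Dv||\Delta v|}{c|D\psi|} \le C|\Delta v| \quad \text{a.e.}
\]
Here we use $|D\psi|>0$ a.e. Since $\Delta v \in L^{2+\eps}_{\loc}$, we get $\Delta\psi \in L^{2+\eps}_{\loc}$. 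Calderón--Zygmund estimates then give $\psi \in W^{2,2+\eps}_{\loc}$.

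I expect the main obstacle to be justifying the pointwise identity $\dv H(D\psi) = D\psi^T\Delta\psi$ a.e. This requires second derivatives, which is why the first step to $W^{2,2}$ is needed. The cancellation behind it is that the divergence of $Z^TZ-\frac{|Z|^2}{2}\id$ involves only $\Delta$ because the mixed terms cancel; it holds for $W^{2,2}\cap W^{1,q}$ maps by approximation. The second obstacle is the lower bound $|D\psi^T\xi|\ge c|D\psi||\xi|$. It follows from $|D\psi|^2\le K\det D\psi$, since the smallest singular value is at least $\det D\psi/|D\psi| \ge |D\psi|/K$.
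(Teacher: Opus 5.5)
Your proof is correct, but its second half takes a different route from the paper's.

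\textbf{First step.} This is the same as in the paper: \eqref{eq:routine1} (equivalently \eqref{vpsi}) together with $v\in W^{2,2}_{\loc}$ gives $\psi\in W^{2,2}_{\loc}$.

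\textbf{Upgrade step in the paper.} The paper never differentiates \eqref{seB}. It reruns the Caccioppoli-type estimate coming from Lemma \ref{lem:dett} on cubes, with $\psi_h-a$ in place of $\psi_h$. Applying the Poincar\'e--Sobolev inequality and letting $h\to 0$ gives a reverse H\"older inequality of the form $\int_Q|D^2\psi|^2\le Cr^{-2}\bigl(\int_{2Q}|D^2\psi|\bigr)^2+C\int_{2Q}|D^2v|^2$. Gehring's lemma then concludes.

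\textbf{Upgrade step in your proof.} You take the divergence of $H(D\psi)=-H(Dv)$. This is legitimate once both maps lie in $W^{2,2}_{\loc}$: the entries of $H$ are then in $W^{1,s}_{\loc}$ for every $s<2$, and $\dv H(Dw)=Dw^T\Delta w$. You combine this with $|D\psi^T\xi|\ge K^{-1}|D\psi||\xi|$ and \eqref{thB} to get the pointwise bound $|\Delta\psi|\le C|\Delta v|$ a.e. Calder\'on--Zygmund estimates then finish. This is the same computation the paper performs only later, in Step 3 of Section \ref{ProofMain}, to obtain \eqref{stropsi}.

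\textbf{Comparison.} Your route is more elementary and gives a sharper conclusion:
\begin{itemize}
\item $\psi$ gets the same exponent $\eps$ as $v$, whereas Gehring's lemma only yields a possibly smaller, non-explicit $\eps$;
\item more generally, once $W^{2,2}_{\loc}$ is known, any $W^{2,p}_{\loc}$ regularity of $v$ with $p\ge 2$ transfers to $\psi$.
\end{itemize}
The paper's route uses only the integral inequality from Lemma \ref{lem:dett} and the quadratic structure of $H$, never the exact divergence identity. It is therefore the robust mechanism of \cite{HMT,RT}, which carries over to settings where no clean pointwise elliptic equation for $\psi$ is available.
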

\begin{proof}
By Theorem \ref{thm:regv} we have $v \in W^{2,2+\eps}_{\loc}(B_1,\R^n)$ for some $\eps > 0$. We can then use \eqref{eq:routine1} to estimate
\[
\int_{B_1}\eta^2|D\psi_h|^2dx \overset{\eqref{eq:routine1}}{\le} C|h|^2 + C\int_{B_1}\eta^2|Dv_h|^2(x)dx \le C|h|^2,
\]
from which we obtain $\psi \in W^{2,2}_{\loc}(B_1,\R^2)$.\ To get the required higher integrability of $D^2\psi$, we employ Gehring's Lemma \cite[Proposition 5.1]{MGR} in a standard way, see \cite[Proposition 6.6]{RT} or \cite[Theorem 6.2]{HMT}.
% Let us recall the argument. We restart from \eqref{eq:routine0} with $\psi_h - a$ in place of $\psi_h$, for a constant $a \in \R^2$ to be chosen later. For the moment, consider the inequality for any $\eta \in C^\infty_c(B_1)$, and take any $h$ such that $h + \spt(\eta) \subset B_1$. The computations above can be repeated verbatim to finally get, instead of \eqref{vpsi}:
%\begin{equation}\label{vpsi1}
%\int_{B_1}\eta^2|D\psi_h|^2dx \le C\int_{B_1}|D\eta|^2|\psi_h-a|^2 dx + C\int_{B_1}\eta^2|Dv_h|^2(x)dx.
%\end{equation}
%Take any square $Q$ of side $r$ such that $4Q \subset B_{1}$. We denote by $\lambda Q$ the square centered at the center of $Q$ and with side $\lambda r$, for any $\lambda > 0$. We choose $\eta \in C_c^\infty(2Q)$, $\eta \equiv 1$ on $Q$, and $|D\eta(x)| \le Cr^{-1}, \forall x \in 2Q$. Combining this choice with the Poincarè-Sobolev inequality on cubes, we obtain from \eqref{vpsi1} and a suitable choice of $a$:
%\[
%\int_{Q}|D\psi_h|^2(x)dx \le \frac{C}{r^2}\left(\int_{2Q}|D\psi_h|(x)dx\right)^2 + C\int_{2Q}|Dv_h|^2(x)dx.
%\]
%Dividing by $|h|^2$ and letting $h \to 0$, we conclude that:
%\[
%\int_{Q}|D^2\psi|^2(x)dx \le \frac{C}{r^2}\left(\int_{2Q}|D^2\psi|(x)dx\right)^2 + C\int_{2Q}|D^2v|^2(x)dx,
%\]
%for a constant $C >0$ which only depends on $\dist(Q,\partial B_1)$, but is otherwise independent of $Q$. We invoke Gehring's Lemma as stated in \cite[Proposition 5.1]{MGR}. Since $D^2v \in L^{2+\eps}_{\loc}(B_1,\R^n)$, we obtain the existence of $\delta > 0$ such that $D^2\psi \in L^{2+\delta}_{\loc}(B_1,\R^2)$ and we conclude the proof.
\end{proof}

\section{Rigidity for approximate solutions}\label{sec:appsol}

In this section we show Theorem \ref{thm:convun}.\ Let us first introduce approximate solutions.\ Consider a rotation $J$ of 90 degrees, namely
\[
J\doteq \left(\begin{array}{cc}0 & -1 \\ 0 & 1\end{array}\right),
\] 
and define the inclusion set
\begin{equation}\label{diffinc}
K = \left\{Y\in\R^{(2n)\times 2}: Y=
\left(
\begin{array}{c}
	X\\
	D\mathcal{A}(X)J\\
\end{array}
\right)
\right\}.
\end{equation}
Notice that $w = (u,v): B_1\to \R^n\times \R^n$ solves $Dw \in K$ a.e.\ if and only if $\dv(D\A(Du)) = 0$. This is the standard way of turning \emph{exact solutions} to a PDE into \emph{exact solutions} to a differential inclusion (and vice versa). As said, we will consider \emph{approximate solutions}, i.e.\ we are interested in showing that given any sequence of equi-Lipschitz maps $(w_j)_j$, weakly-$*$ converging to $w \in \Lip(B_1,\R^{2n})$
\begin{equation}\label{P1}
\dist(Dw_j,K) \to 0 \text{ in $L^1(B_1)$}\; \Longrightarrow\; Dw_j \to Dw \text{ in }L^1(B_1,\R^{2n}).
\end{equation}
Equivalent rephrasings of this property are more convenient, compare to the statement of Theorem \ref{thm:convun}.\ The equivalence of these two statements is given by the next:

\begin{lemma}\label{equn}
	Let $K$ be defined in \eqref{diffinc}. Then, the following are equivalent:
	\begin{enumerate}
		\item \eqref{P1} holds; \label{appsol} 
		\item for every sequence $(u_j)_j$ weakly-$*$ converging to a Lipschitz map $u$ with
		\begin{equation}\label{approarea0}
			\dv(D\mathcal{A}(Du_j)) = \dv(F_j), \text{ for $F_j$ equibounded in $L^\infty(B_1)$, $F_j \to 0$ in $L^1(B_1)$,}
		\end{equation}
		 $(Du_j)_j$ converges strongly in $L^1$ towards $Du$; \label{pde}
%		\item\label{YM} for every gradient Young measure $\nu = (\nu_x)_{x \in B_1}$ with $\spt(\nu_x) \subset K$ for a.e. $x \in B_1$, it holds $\nu_x = \delta_{A(x)}$ for some $A(x) \in K$, for a.e. $x \in B_1$.
	\end{enumerate}
\end{lemma}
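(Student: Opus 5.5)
The equivalence should follow from one elementary fact: on the simply connected domain $B_1\subset\R^2$, a bounded matrix field $G\in L^\infty(B_1,\R^{n\times 2})$ satisfies $\dv G=0$ row-wise in the sense of distributions if and only if $G=Dv\,J^{-1}$ for some Lipschitz $v:B_1\to\R^n$, where $J$ is the rotation by $90$ degrees. Indeed, each row of $GJ$ is then curl-free. The potential $v$ is unique up to an additive constant, and $\|Dv\|_\infty\le\|G\|_\infty$. I read $J$ in \eqref{diffinc} as the genuine rotation $\left(\begin{smallmatrix}0&-1\\1&0\end{smallmatrix}\right)$, so that $Dw\in K$ a.e.\ encodes $\dv(D\A(Du))=0$ as stated after \eqref{diffinc}.

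\textbf{\eqref{appsol}$\Rightarrow$\eqref{pde}.} Let $(u_j)_j$ be as in \eqref{approarea0}. Then $\dv(D\A(Du_j)-F_j)=0$, and $D\A(Du_j)-F_j$ is equibounded, since $D\A$ is bounded on bounded sets. By the fact above we get $v_j$ with $Dv_j=(D\A(Du_j)-F_j)J$, normalized to have zero mean, so that $w_j\doteq(u_j,v_j)$ is equi-Lipschitz. Testing $K$ with $X=Du_j$ gives pointwise
\[
\dist(Dw_j,K)\le |F_jJ|=|F_j|,
\]
and hence $\dist(Dw_j,K)\to0$ in $L^1$. Take any subsequence; extract a further one along which $w_j\overset{*}{\rightharpoonup}(u,v)$. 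By \eqref{P1}, $Du_j\to Du$ in $L^1$ along it. The limit $u$ is fixed by hypothesis, so every subsequence has a further subsequence with this limit, and the whole sequence converges.

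\textbf{\eqref{pde}$\Rightarrow$\eqref{appsol}.} Let $w_j=(u_j,v_j)$ be equi-Lipschitz with $w_j\overset{*}{\rightharpoonup}w=(u,v)$ and $\dist(Dw_j,K)\to0$ in $L^1$. Set $F_j\doteq D\A(Du_j)-Dv_jJ^{-1}$. These are equibounded, and $\dv(Dv_jJ^{-1})=0$, so $\dv(D\A(Du_j))=\dv(F_j)$. The key step is the pointwise bound
\[
|F_j|\le C\,\dist(Dw_j,K),
\]
which I would prove without any measurable selection, as follows. Fix a point and any $X$ with $|Dw_j-(X,D\A(X)J)|\le 2\dist(Dw_j,K)$ (or with equality in the limit when the distance is $0$). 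Then
\[
|X-Du_j|\le 2\dist(Dw_j,K)\le 2|Dw_j-(Du_j,D\A(Du_j)J)|\le C,
\]
so $X$ ranges in a fixed compact set, on which $D\A$ is Lipschitz. Hence
\[
|F_j|\le |D\A(Du_j)-D\A(X)|+|D\A(X)J-Dv_j|\le C\,\dist(Dw_j,K).
\]
Taking the infimum over such $X$ gives the bound, so $F_j\to0$ in $L^1$. Now \eqref{pde} yields $Du_j\to Du$ in $L^1$. Then $Dv_j=(D\A(Du_j)-F_j)J$ converges in $L^1$ by dominated convergence along a.e.-converging subsequences (again using equiboundedness and a subsequence argument). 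Its limit must equal $Dv$ by the weak-$*$ convergence, so $Dw_j\to Dw$ in $L^1$.

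The only mildly delicate point is the distance comparison just described: one must ensure that near-minimizers $X$ stay in a bounded set, so that the local Lipschitz property of $D\A$ can be used. Everything else is the potential representation in 2D and standard subsequence arguments.
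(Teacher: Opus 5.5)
Your proof is correct and follows the paper's route. In both directions you use the same auxiliary field $F_j = D\A(Du_j) + Dv_jJ$ (your $-Dv_jJ^{-1}$ coincides with it, since you rightly read $J$ as the genuine rotation despite the misprint in its definition) and the same potential $v_j$ with $Dv_j = (D\A(Du_j)-F_j)J$. Your argument is more complete than the paper's, because you supply three details it leaves implicit: the existence of the Lipschitz potential on the simply connected ball, the pointwise bound $|F_j|\le C\dist(Dw_j,K)$ obtained from near-minimizers lying in a fixed compact set, and the subsequence arguments needed because $(v_j)_j$ need not converge a priori.
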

\begin{proof}
%	\eqref{P1} is readily seen to be equivalent to \eqref{YM} by Theorem \ref{FYM}, and in particular \eqref{strongc}.\ 
%	Let us now show the equivalence between $\eqref{appsol}$ and \eqref{pde}. 
	We start assuming $\eqref{pde}$. Consider $(w_j)_j$ as in \eqref{P1}, and define $w_j(x) = (u_j(x),v_j(x))$ with $u_j, v_j: B_1 \to \R^n$ equi-Lipschitz.\ Since $\dist(Dw_j,K) \to 0$ strongly in $L^1(B_1)$, we can set $F_j \doteq D\mathcal{A}(Du_j) +Dv_jJ$, which is equibounded in $L^\infty(B_1,\R^{n\times 2})$, and infer $F_j \to 0 \text{ strongly in $L^1(B_1,\R^{n\times 2})$}$.\ Thus,
	\[
	\dv(D\mathcal{A}(Du_j)) = \dv(D\mathcal{A}(Du_j) + Dv_jJ) = \dv(F_j),
	\]
	and hence by \eqref{approarea0} $(Du_j)_j$ converges strongly in $L^1$. As $Dv_j = -F_jJ + D\mathcal{A}(Du_j)J$, a sum of two strongly convergent sequences, also $(Dv_j)_j$ strongly converges in $L^1$, and hence $(Dw_j)_j$ does, as wanted. Conversely, assume that \eqref{appsol}, and thus \eqref{P1}, is true. Consider $(u_j)_j$ as in \eqref{pde}, and define $v_j$ through \eqref{approarea0}, namely
	\[
	Dv_j(x) = D\mathcal{A}(Du_j)(x)J - F_j(x)J, \quad \text{ for a.e. $x$ and for all $j$.}
	\]
	Setting $w_j = (u_j,v_j)$, this map is easily seen to fulfill the assumptions of property \eqref{P1}, and hence converges strongly due to our assumption \eqref{appsol}. It follows that \eqref{pde} holds.
\end{proof}

\begin{rem} \label{rem:TN}
	By combining Theorem \ref{thm:convun} and Lemma \ref{equn}, we obtain that every \emph{gradient Young measure} compactly supported in the inclusion set $K$ defined in \eqref{diffinc} must be trivial.\ In particular, the set $K$ does not contain any $T_N$ configuration, the \emph{laminates} used in \cite{SMVS,LSP} to construct irregular critical points. For the sake of brevity we do not provide the details, but we refer the reader to \cite[Section 3]{DMU} or to \cite[Section 4.1]{GT24} for the theory of Young measures, and to \cite{SMVS,LSP} for notions on laminates and $T_N$ configurations.
\end{rem}

\subsection{Proof of Theorem \ref{thm:convun}}\label{appsolrig}
Consider, as in the statement, a sequence $(u_j)_j$ weakly-$*$ converging to $u \in \Lip(B_1,\R^n)$ and satisfying
\begin{equation}\label{approarea}
	\dv(D\mathcal{A}(Du_j)) = \dv(F_j).
\end{equation}
It suffices to show that $Du_j$ converges strongly in $L^1(B_r)$, for any fixed $r < 1$. Thus, consider $r$ fixed.\ We start as in Step 1 of Theorem \ref{thm:main}, i.e. by factorizing $u_j$ as
	\begin{equation}\label{factn}
	u_j = v_j\circ \varphi_j.
	\end{equation}
	Indeed we let $g_j \doteq \id + Du_j^TDu_j$ in $B_1$ and $g_j \equiv \id$ in $B_1^c$.\ Since $g_j$ are equibounded, uniformly positive-definite symmetric tensors, we can find a $K$-quasiconformal solution $\varphi_j \in W^{1,2}_{\loc}(\R^2,\R^2)$ (for some $K$ independent of $j$) fulfilling
	\begin{equation}\label{gnconf}
		g_j = \rho_j D\varphi_j^TD\varphi_j, \text{ a.e. in $\R^2$,} \quad \text{ with } \rho_j \doteq \frac{\sqrt{\det(g_j)}}{\det(D\varphi_j)}.
	\end{equation}
	Compare to \eqref{gconf} and the details of that section. It is convenient to choose $\varphi_j$ to be the \emph{normalized} solution to the Beltrami equation associated to $g_j$, see \cite[Theorem 5.3.4]{AIM}, namely, using complex notation,
	\begin{equation}\label{reno}
	\varphi_j(0) = 0, \varphi_j(1) = 1, \text{ and }\varphi_j(\infty) = \infty \quad \forall j.
	\end{equation}
	Through this choice, $(\varphi_j)_j$ admits at least one weak $W^{1,2}_{\loc}(\R^2,\R^2)$ accumulation point $\varphi$, which is still $K$-quasiconformal and normalized \cite[Theorem 3.9.4]{AIM}. We assume without loss of generality that $\varphi$ is the weak $W^{1,2}_{\loc}(\R^2,\R^2)$ limit of $(\varphi_j)_j$.\ The same exact reasoning can be employed on $\psi_j \doteq \varphi_j^{-1}$, which are still $K$-quasiconformal by Theorem \ref{thm:quas}\eqref{quasma1} and fulfill \eqref{reno}.\ Assume, again without loss of generality, that $\psi$ is the only weak $W^{1,2}_{\loc}(\R^2,\R^2)$ quasiconformal limit of this sequence.\ The local uniform convergence provided by Theorem \ref{thm:quas}\eqref{quasma4} shows that $\psi = \varphi^{-1}$.\	Taking now any fixed $s<t \in (r,1)$, we consider the sets $U_j = \varphi_j(B_1)$, $U_t = \varphi(B_t)$, and $U_s = \varphi(B_s)$, which are open since $\varphi_j$ and $\varphi$ are homeomorphisms.\ A simple contradiction argument shows that: 
	\begin{equation}\label{Uset}
	\overline{\varphi_j(B_r)} \subset U_s \subset \overline{U_s} \subset U_t \subset \overline{U_t} \subset U_j, \quad \text{ for all } j \in \N \text{ sufficiently large}.
	\end{equation}
	For simplicity, and without loss of generality, we will assume that \eqref{Uset} holds for all $j$.
	
%	 We define, in addition, $\psi_j \doteq \varphi_j^{-1}$ and $v_j \doteq u_j \circ \psi_j$. 
	 
	 \medskip
	 
	We define $v_j \doteq u_j \circ \psi_j$. As in \eqref{eq:compara}, we notice that:
	\begin{equation}\label{eq:comparaj}
	|Dv_j(y)|\le C|D\psi_j(y)| \text{ a.e. in }U. 
 	\end{equation}
 	Throughout, $C$ is a constant that varies from line to line and which is independent of $j$. Since $(v_j)_j$ is also equibounded in $L^\infty(U_j,\R^2)$, \eqref{eq:comparaj} implies that $(v_j)_j$ is equibounded in $W^{1,2}(U_j,\R^2)$ and we can assume that it has a weak $W^{1,2}$ limit $v$ on ${U_t}$. We will show that 
 	\begin{equation}\label{strov}
 		\text{$v_j$ converges strongly in $W^{1,2}(U_s,\R^n)$ to $v$.}
 	\end{equation}
	 We compute the equation solved by $v_j$, similarly to what we did in \eqref{vf}. For any $\eta \in C^\infty_c(U_j,\R^2)$:
	\begin{equation}\label{vfn}
			\int_{U_j}\langle Dv_j,D\eta\rangle dy = \int_{B_1}\sqrt{\det(g_j)}\langle Du_jg_j^{-1},D(\eta\circ \varphi_j)\rangle dx.
	\end{equation}
	Now, combining \eqref{Basg} and \eqref{shortexpDA}, we obtain from \eqref{approarea} and \eqref{vfn} that:
	\begin{equation}\label{vfnn}
		\int_{U_j}\langle Dv_j,D\eta\rangle dy = \int_{B_1}\langle F_j,D(\eta\circ \varphi_j)\rangle dx.
	\end{equation}
	After a change of variable we finally get:
	\begin{equation}\label{vfnn}
		\int_{U_j}\langle Dv_j,D\eta\rangle dy = \int_{B_1}\langle F_j,D\eta(\varphi_j)D\varphi_j\rangle dx = \int_{U_j}\langle F_j\circ \psi_j,D\eta \cof(D\psi_j))\rangle dy.
	\end{equation}
	Set $G_j \doteq F_j\circ \psi_j\cof(D\psi_j)^T$.\ In the sense of distributions in $U_j$, $v_j$ solves
	\begin{equation}\label{pdevn}
	\Delta v_j = \dv(G_j).
	\end{equation}
	Using the fact that $\psi_j$ is $K$-quasiconformal, we can bound:
%	 compare Theorem \ref{thm:quas}\eqref{quasma1},
	\begin{align*}
	\int_{U_j}|G_j|^2dx &\le C\int_{U_j}|F_j\circ \psi_j|^2|D\psi_j|^2dx \le C\int_{U_j}|F_j\circ \psi_j|^2\det(D\psi_j)dx = C\int_{B_1}|F_j|^2dx.
	\end{align*}
	Hence, $\lim_{j\to \infty}\|G_j\|_{L^2(U_j)} = 0$. 
%	choosing $q = 2$, we obtain that
%	\begin{equation}\label{Gn}
%		 \lim_{n\to \infty}\|G_j\|_{L^2(U_j)} = 0.
%	\end{equation}
	From this, \eqref{Uset} and \eqref{pdevn} we obtain that $v$ is harmonic, and thus $v_j'\doteq v_j - v$ again solves \eqref{pdevn} in $U_t$.\ Testing this PDE with $\eta^2 v_j'$, for any $\eta \in  C^\infty_c(U_t)$, we get to:
	\begin{equation}\label{wn}
	\int_{U_t}|Dv'_j|^2\eta^2dx \le C \left( \int_{U_t}|v'_j|^2dx + \int_{U_j}|G_j|^2dx\right),
	\end{equation}
	which shows \eqref{strov}.

\medskip
	We now show that, up to a subsequence,
	\begin{equation}\label{strofi}
		\text{$\varphi_j$ converges strongly and pointwise a.e. in $W^{1,2}(B_s,\R^2)$ to $\varphi$.}
	\end{equation}
	Performing the same computations that brought us to \eqref{eq:yU}, we infer
	 \begin{equation}\label{eq:yUn}
		H(D\psi_j)(y) = -H(Dv_j)(y), \quad \text{for a.e. $y \in U_j$.}
	\end{equation}
	Using \eqref{sptnull}-\eqref{eq:comparaj}-\eqref{eq:yUn} and the quadratic structure of $H(\cdot)$, we have, for a.e. $y \in U_t$ and for all $j,m \in \N$, 
	\begin{equation*}
		\begin{split}
	|D\psi_j(y)-D\psi_m(y)|^2 \le C\det(D\psi_j(y)-D\psi_m(y)) +C|H(Dv_j(y))-H(Dv_m(y))|
	\end{split}
	\end{equation*}
	With computations analogous to the ones of \eqref{eq:routine0}, we obtain, for all $\eta \in C^\infty_c(U_t)$:
	\[
	\int_{U_t} |D\psi_j(y)-D\psi_m(y)|^2\eta^2 dy \le C(\eta)\left( \int_{U_t}|\psi_j(y)- \psi_m(y)|^2dy + \int_{U_t}|H(Dv_j(y))-H(Dv_m(y))|\eta^2dy\right).
	\]
	Using \eqref{strov}, we infer that $(D\psi_j)_j$ is a Cauchy sequence in $L^2(U_s,\R^2)$. Hence, $(\psi_j)_j$ converges to $\psi$ in $W^{1,2}(U_s,\R^2)$ and thus, up to a subsequence, $(D\psi_j)_j$ converges pointwise a.e. towards $D\psi$ on $U_s$. Now set
	\[
	f_j =\frac{|D\psi_j|^2}{\det(D\psi_j)},
	\]
	which is well-defined by Theorem \ref{thm:quas}\eqref{quasma5} and an equibounded sequence in $L^\infty(U_s)$ since $(\psi_j)_j$ is quasiconformal with uniform quasiconformality constant. We then have:
%	 By the fact that $(D\psi_j)_j$ converges pointwise a.e. to $D\psi$ and the dominated convergence theorem we can deduce
	\[
	\lim_{j}\int_{B_{s}}|D\varphi_j|^2dx = \lim_{j}\int_{U_s}|(D\psi_j)^{-1}|^2\det(D\psi_j)dy = \lim_{j}\int_{U_s}f_jdy = \int_{U_s}\frac{|D\psi|^2}{\det(D\psi)}dy = \int_{B_s}|D\varphi|^2dx,
	\]
%	
%	 We also deduce that $(\varphi)_j$ converges strongly towards $\varphi$ in $W^{1,2}(B_s,\R^2)$, indeed
%	\[
%	\int_{B_{s}}|D\varphi_j|^2(x)dx = \int_{U_s}|D\psi_j^{-1}|^2(y)\det(D\psi_j(y))dy = \int_{U_s}\frac{|D\psi_j|^2}{\det(D\psi_j)}(y)dy.
%	\]
%	The sequence $$f_j =\frac{|D\psi_j|^2}{\det(D\psi_j)}$$ is equibounded in $L^\infty(U_s)$ since $(\psi_j)_j$ is quasiconformal with a uniform quasiconformality constant. Due to the pointwise a.e. convergence of the gradients, we see that
%	\[
%	\lim_j\int_{U_s}\frac{|D\psi_j|^2}{\det(D\psi_j)}(y)dy = \int_{U_s}\frac{|D\psi|^2}{\det(D\psi)}(y)dy = \int_{B_s}|D\varphi|^2(x)dx.
%	\]
%	We obtain
%	\[
%	\lim_{n \to \infty} \int_{B_{s}}|D\varphi_j|^2(x)dx = \int_{B_s}|D\varphi|^2(x)dx,
%	\]
	from the dominated convergence theorem applied to the sequence $(f_j)_j$. This shows \eqref{strofi}.

\medskip

	We are finally ready to conclude. Let $\alpha \in (0,1)$ be sufficiently small. Recalling \eqref{strov} we have, for all $j$,
	\[
	\int_{B_r}|Dv_j(\varphi_j(x))|^\alpha dx = \int_{B_r}|Dv_j(y)|^\alpha \det(D\psi_j(y))dy \le C,
	\]
	due to H\"older inequality and Theorem \ref{thm:quas}\eqref{quasma4}, provided $\alpha$ is sufficiently small. Furthermore, we estimate
	\begin{align*}
	\int_{B_r}|Dv_j(\varphi_j)-Dv(\varphi)|^\alpha dx &\le \int_{B_r}|Dv_j(\varphi_j)-Dv(\varphi_j)|^\alpha dx + \int_{B_r}|Dv(\varphi_j)-Dv(\varphi)|^\alpha dx\\
	& = \int_{\varphi_j(B_r)}|Dv_j-Dv|^\alpha \det(D\psi_j)dy + \int_{B_r}|Dv(\varphi_j)-Dv(\varphi)|^\alpha dx.
	\end{align*}
	The first integral converges to zero due to \eqref{Uset}-\eqref{strov} and H\"older inequality, provided $\alpha$ is sufficiently small, and the second converges to zero since $v$ is harmonic and $(\varphi_j)_j$ converges uniformly to $\varphi$ on $B_r$. In particular, up to non-relabeled subsequences, $Dv_j(\varphi_j) \to Dv(\varphi)$ for a.e. $x \in B_r$. Through \eqref{factn}, we can write
	$Du_j(x) = Dv_j(\varphi_j(x))D\varphi_j(x)$, which converges a.e. in $B_r$ to $Dv(\varphi(x))D\varphi(x) \in L^2(B_r,\R^{n\times 2})$. Thus the dominated convergence theorem implies that $Du_j$ converges strongly in $L^2(B_r)$ to $Dv(\varphi(x))D\varphi(x)$.\ In particular, we must have that $Du(x) = Dv(\varphi(x))D\varphi(x)$ for a.e. $x \in B_r$.\ This reasoning shows that for every subsequence of $(u_j)$, there exists a further subsequence converging strongly in $W^{1,2}(B_r,\R^n)$ to $u$, from which we deduce the required strong convergence and hence the proof.

\appendix

\section{Hartman and Wintner's argument revisited}\label{appa}

In the following, we will use the classical notations $o(|z-x|^t), O(|z-x|^t)$ to denote $C^0$ functions defined on a neighborhood of the point $x$ such that
\begin{equation}\label{oO}
\lim_{z \to x}\frac{o(|z-x|^t)}{|z-x|^t} = 0 \text{ and } |O(|z-x|^t)| \le C|z-x|^t, \quad \text{for some $C > 0$}.
\end{equation}
Furthermore, for a map $w: B_1\to\R^n$, we will denote by $w_j$ its $j$-th component.\ Finally, for a continuous $h: \Gamma \to \C$, where $\Gamma$ is the image of a regular parametrization $\Gamma: [0,1]\to\C$, we set
\[
\int_{\Gamma}h d\gamma = \int_0^1 h(\Gamma(t))\Gamma'(t)dt \quad \text{ and }\quad \int_{\Gamma}|h| d|\gamma| = \int_\Gamma |h|(\Gamma(t))|\Gamma'(t)|dt.
\]
\begin{Teo}\label{thm:stronguq}
Let $n \ge 1$, $r>2$, $g \in L^{r}(B_1)$, $B_1\subset \R^2$, and $w \in W^{2,2+\gamma}_{\loc} \cap C^{1,\gamma}(B_1,\R^n)$ be a solution to
\begin{equation}\label{eq:intineq}
|\Delta w|(z) \le g(z)|Dw|(z) \quad \text{for a.e. }z \in B_1.
\end{equation}
Then either $Dw \equiv 0$ or the zeroes of $Dw$ are isolated. In this case, if $x \in B_1$ is such that $Dw(x) = 0$, there exists $k = k(x) \in \N$ and $a_j \in \C$ such that, for all $j =1,\dots, n$,
\[
\partial_z w_j(z) = a_j(z-x)^{k} + o(|z-x|^{k}),
\]
and $a_{j_0} \neq 0$ for some $j_0$.
\end{Teo}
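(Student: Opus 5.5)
Set $f \doteq \partial_z w = (\partial_z w_1,\dots,\partial_z w_n) \in \C^n$. Then $\partial_{\overline z} f = \frac14 \Delta w$ by Lemma \ref{teccomp}, and $|Dw| \le C|f|$ since $w$ is real valued, so \eqref{eq:intineq} becomes a vector-valued Carleman-type inequality
\[
|\partial_{\overline z} f| \le C g\, |f| \quad \text{a.e. in } B_1,\qquad g \in L^r,\ r>2.
\]
The plan is to reduce this to a scalar-like holomorphic factorization. The main tool is the Cauchy transform $T\phi(z) = \frac{1}{\pi}\int \frac{\phi(\zeta)}{z-\zeta}\,d\zeta$, which maps $L^r$ into $C^{0,1-2/r}$.

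First, localize near $x$; set $x=0$ and work in a small disk $B_\delta$. On the set where $f\neq 0$, define the matrix-valued coefficient $A \doteq \frac{\partial_{\overline z} f\, \overline{f}^T}{|f|^2}$, and set $A \doteq 0$ where $f=0$. Then $\partial_{\overline z} f = A f$ a.e. in $B_\delta$ (on $\{f=0\}$ we have $\partial_{\overline z} f = 0$ a.e.), and $|A| \le C g \in L^r$. Because this is a system, the scalar similarity principle does not apply directly. Instead, solve $\partial_{\overline z} M = A M$ for an invertible matrix $M$ with $M(0)=\Id$, continuous, $M \in W^{1,r}$. This is done by the fixed point $M = \Id + T(\chi_{B_\delta} A M)$ in $C^0(\overline{B_\delta})$. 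It is a contraction once $\|T(\chi_{B_\delta}A\,\cdot)\|_{C^0\to C^0} \le C\|A\|_{L^r(B_\delta)}\delta^{1-2/r}$ is small, which holds for $\delta$ small. The same smallness gives $|M-\Id|\le 1/2$, so $M$ is invertible. Next put $\Phi \doteq M^{-1} f$. Using $\partial_{\overline z}(M^{-1}) = -M^{-1}(\partial_{\overline z}M)M^{-1}$ (valid since $M\in W^{1,r}\cap C^0$), we get $\partial_{\overline z}\Phi = -M^{-1}AMM^{-1}f + M^{-1}Af = 0$ weakly. By Weyl's lemma $\Phi$ is holomorphic in $B_\delta$. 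I expect this step to be the main obstacle: one must justify the product rule in these low-regularity classes, and there is a subtle point that the fixed-point operator acts on matrices, so $A$ multiplies $M$ on the left as required.

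Second, conclude. If $\Phi\equiv 0$ then $f\equiv 0$ near $0$. The set where $Dw$ vanishes in a neighbourhood is then open; it is also closed in $B_1$, since at any limit point $x$ the same construction around $x$ gives $\Phi$ holomorphic and vanishing on an open subset of $B_\delta(x)$, hence identically zero. By connectedness $Dw\equiv0$. Otherwise, since $\Phi(0)=M(0)^{-1}f(0)=0$, write $\Phi(z) = z^k b + O(|z|^{k+1})$ with $k\ge1$ and $b\in\C^n\setminus\{0\}$. Then
\[
f(z) = M(z)\Phi(z) = z^k M(0)b + z^k (M(z)-M(0))b + O(|z|^{k+1}) = b z^k + o(|z|^k),
\]
using the continuity of $M$ and $M(0)=\Id$. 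This gives $a_j = b_j$, with some $a_{j_0}\ne0$. Isolatedness also follows: $f = M\Phi$ with $M$ invertible and $\Phi$ a nontrivial holomorphic vector, whose zeros are isolated (they are common zeros of components, at least one of which is nonzero). Finally, $|Dw|$ and $|f|$ vanish simultaneously, since $\partial_{\overline z} w_j = \overline{\partial_z w_j}$ for real $w_j$.
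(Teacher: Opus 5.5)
Your argument is correct, but it takes a different route from the paper. The paper adapts Hartman--Wintner directly. For each component it applies the Cauchy--Pompeiu formula to $z^{-k}(z-y)^{-1}\partial_z w_j$ on a punctured disk. It then multiplies the resulting pointwise bound by $g(y)|y-z_0|^{-1}$, integrates, and absorbs using $\int_{B_\rho} g\,|y-b|^{-1}dy\le C\rho^{1-2/r}$. This shows that $Dw=o(|z-x|^{k-1})$ forces $Dw=O(|z-x|^k)$ and the existence of $\lim \partial_z w_j/(z-x)^k$ (Proposition \ref{filem}). Since the constants do not depend on $k$, it also shows that infinite-order vanishing forces $Dw\equiv 0$ near $x$ (Proposition \ref{selem}). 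An open--closed argument then concludes.

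You instead prove a vector-valued Carleman--Bers--Vekua similarity principle. The complex-linearization $A=\partial_{\overline z}f\,\overline f^T/|f|^2$ turns the inequality into a genuine linear system $\partial_{\overline z}f=Af$ with $|A|\lesssim g$. A contraction for the Cauchy transform produces a continuous invertible gauge $M$. The factorization $f=M\Phi$ with $\Phi$ holomorphic then reduces isolatedness, the finite-order expansion and the global dichotomy to elementary facts about holomorphic functions.

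Your route is more structural and gives a little more. For example, the remainder is actually $O(|z-x|^{k+1-2/r})$, since $M\in C^{0,1-2/r}$. The price is needing Calder\'on--Zygmund bounds for the Beurling transform to get $M\in W^{1,r}$ and justify the product rule for $M^{-1}f$. The paper's route works componentwise with only H\"older's inequality and the Cauchy formula, and never handles a matrix gauge.

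There is one minor slip. The fixed point $M=\Id+T(\chi_{B_\delta}AM)$ does not satisfy $M(0)=\Id$ in general. You can fix this in either of two ways:
\begin{enumerate}
\item Replace $T$ by $\phi\mapsto T\phi-T\phi(0)$, which has the same $\partial_{\overline z}$ and at most twice the $C^0$ bound.
\item Simply take $a=M(0)b$, which is nonzero because $M(0)$ is invertible.
\end{enumerate}
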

%\begin{rem}
%By a simple bootstrap argument, in the previous theorem it would have been sufficient to assume that $w \in W^{2,1}_{\loc}(B_1,\R^n)$ is a solution to \eqref{eq:intineq} to deduce $w \in W^{2,2+\gamma}_{\loc} \cap C^{1,\gamma}(B_1,\R^n)$ for some $\gamma > 0$.
%%, but we chose to state it in this way to highlight the core of the statement, i.e. the properties of the zero set of $Dw$.
%\end{rem}
Theorem \ref{thm:stronguq} follows from adapting the arguments of \cite{HWW}. In particular, the proofs of Propositions \ref{filem}-\ref{selem} are essentially the same as the ones of \cite[Theorem 1-Theorem 2]{HWW} respectively.

\begin{prop}\label{filem}
Let $w$ fulfill the assumptions of Theorem \ref{thm:stronguq}. Then, for all $k \ge 1$ the following holds:
\begin{equation}\label{eq:secasew}
Dw(z) = o(|z-x|^{k-1})\quad \Longrightarrow \quad\lim_{z \to x}\frac{\partial_zw_j(z)}{(z-x)^{k}} \text{ exists for all $j$}.
\end{equation}
\end{prop}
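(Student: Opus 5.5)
We argue as in Hartman--Wintner, using a Cauchy--Pompeiu representation for $\partial_z w_j$. Assume $x=0$ and work on a small disc $B_\delta\subset B_1$. Since $w\in W^{2,2+\gamma}_{\loc}$, each $\partial_z w_j\in W^{1,2+\gamma}_{\loc}$ and
\[
\partial_{\overline z}(\partial_z w_j)=\tfrac14\Delta w_j \doteq q_j
\]
by Lemma \ref{teccomp}. By \eqref{eq:intineq} we have $|q_j|\le g|Dw|$ a.e. The Cauchy--Pompeiu formula on $B_\delta$ then gives, for $z\in B_\delta$,
\[
\partial_z w_j(z)=\frac{1}{2\pi i}\int_{\partial B_\delta}\frac{\partial_\zeta w_j(\zeta)}{\zeta-z}\,d\gamma(\zeta)-\frac{1}{\pi}\int_{B_\delta}\frac{q_j(\zeta)}{\zeta-z}\,d\zeta .
\]
The boundary term is holomorphic in $z$. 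We then subtract its Taylor polynomial of degree $k-1$ and the corresponding expansion of the kernel. Using the standard identity
\[
\frac{1}{\zeta-z}=\sum_{l=0}^{k-1}\frac{z^l}{\zeta^{l+1}}+\frac{z^{k}}{\zeta^{k}(\zeta-z)},
\]
we arrive at
\[
\partial_z w_j(z)=P_j(z)+z^{k}\Big(c_j-\frac1\pi\int_{B_\delta}\frac{q_j(\zeta)}{\zeta^{k}(\zeta-z)}\,d\zeta\Big).
\]
Here $P_j$ is a polynomial of degree $\le k-1$ and $c_j$ is a constant.

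The first step is to show that the kernel-expanded integrals make sense. The hypothesis $Dw=o(|\zeta|^{k-1})$ gives $|q_j(\zeta)|\le g(\zeta)\,o(|\zeta|^{k-1})$. Since $g\in L^r$ with $r>2$, Hölder's inequality with exponent $r'<2$ shows that
\[
\int_{B_\delta}\frac{|q_j|}{|\zeta|^{l+1}}\,d\zeta\le C\|g\|_r\Big(\int_{B_\delta} |\zeta|^{(k-2-l)r'}d\zeta\Big)^{1/r'}<\infty
\]
for all $l\le k-1$. For the borderline case $l=k-1$ we need $|\zeta|^{-r'}$ to be integrable, which holds precisely because $r'<2$. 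This is where $r>2$ enters.

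The second step is to identify $P_j$. Since $\partial_z w_j=o(|z|^{k-1})$, the polynomial $P_j$ must vanish identically. Indeed, once we show that the bracketed term is $o(|z|^{-1})$ or bounded, the remainder is $o(|z|^{k-1})$, and a polynomial of degree $\le k-1$ that is $o(|z|^{k-1})$ is zero. Concretely, $P_j$ collects all degree $<k$ contributions, namely the Taylor coefficients of the boundary term minus the moments $\frac1\pi\int q_j\zeta^{-l-1}$.

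The third step, which I expect to be the main obstacle, is to prove that
\[
I(z)\doteq\int_{B_\delta}\frac{q_j(\zeta)}{\zeta^{k}(\zeta-z)}\,d\zeta
\]
has a limit as $z\to0$, namely $\int q_j\zeta^{-k-1}$. This limit exists in the improper sense, since $|q_j|/|\zeta|^{k+1}\le g\,o(|\zeta|^{k-1})|\zeta|^{-k-1}=g\,o(|\zeta|^{-2})$, which is not absolutely integrable in general. So the limit of $\partial_zw_j/z^k$ only exists if we handle this carefully.

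I would first bootstrap the decay hypothesis to $Dw=O(|z|^{k-1+\alpha})$ with $\alpha=1-2/r>0$, using the same representation. With the $o(|z|^{k-1})$ bound, the Hölder estimate of the potential term gives an $O(|z|^{k-1+\alpha})$ bound on the remainder. Then $|q_j|\le g|\zeta|^{k-1+\alpha}$, and $|\zeta|^{-2+\alpha}g$ is integrable when $r'(2-\alpha)<2$. If this fails, I would iterate the improvement in small steps of $\alpha$.

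Given the improved decay, I split $I(z)$ into the regions $|\zeta|<2|z|$ and $|\zeta|\ge2|z|$. On the inner region, Hölder's inequality with the $L^{r'}$ norm of $|\zeta-z|^{-1}$ shows the contribution is $o(1)$. On the outer region, dominated convergence gives convergence to $\int q_j\zeta^{-k-1}$. This yields existence of $\lim_{z\to0}\partial_zw_j(z)/z^{k}$.
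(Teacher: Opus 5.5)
Your proposal is correct. The representation is the same as the paper's, but the key step, upgrading the decay so that $\partial_z w_j/z^k$ has a limit, goes by a different route.

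\textbf{Same representation.} The paper applies the divergence theorem to $z^{-k}(z-y)^{-1}\partial_z w_j$ on $B_\rho$ with small discs around $y$ and $0$ removed. The small-circle integral at $0$ vanishes because $\partial_z w_j=o(|z|^{k-1})$, and this plays exactly the role of your $P_j\equiv 0$. The resulting identity \eqref{eqA} is your formula with the polynomial already removed.

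Two small points on your version:
\begin{itemize}
\item Your $c_j$ is not a constant. It is the holomorphic function $z\mapsto\frac{1}{2\pi i}\int_{\partial B_\delta}\frac{\partial_\zeta w_j}{\zeta^k(\zeta-z)}\,d\gamma$, which is harmless since it is continuous at $0$.
\item The missing justification that $P_j\equiv 0$ is supplied by your own first H\"older round: it gives $|I(z)|\le C|z|^{-2/r}=o(|z|^{-1})$.
\end{itemize}

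\textbf{Paper's decay upgrade.} The paper obtains $Dw=O(|z|^k)$ in one step by absorption. It multiplies the pointwise bound \eqref{eqzy} by $g(y)|y-z_0|^{-1}$ and integrates. It then uses the identity $|(z-y)(y-z_0)|^{-1}=|z-z_0|^{-1}|(z-y)^{-1}+(y-z_0)^{-1}|$ together with $\int_{B_\rho}g|y-b|^{-1}\,dy\le C\rho^{2/r'-1}$, and absorbs for $\rho$ small. Finally it passes to the limit in \eqref{eqA} by equi-integrability.

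\textbf{Your decay upgrade.} You bootstrap instead. Scaling and H\"older give $|I(z)|\le C|z|^{\beta-2/r}$ whenever $Dw=O(|z|^{k-1+\beta})$ with $\beta<2/r$.
\begin{itemize}
\item Each round therefore gains $\alpha=1-2/r$, exactly in the range where integrability of $g|\zeta|^{\beta-2}$ is still missing.
\item Finitely many rounds reach $\beta>2/r$; if you land exactly on $2/r$, lower $\beta$ slightly.
\item After that, $I$ is bounded, so one more round gives $\beta=1$. Then your inner/outer splitting with dominated convergence works.
\item On the inner region, either keep the weight $|\zeta|^{\beta-1}$ inside the $L^{r'}$ norm or first push to $\beta=1$.
\end{itemize}

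\textbf{What each buys.} Your route uses only H\"older and scaling and is fairly standard potential theory, at the cost of more bookkeeping. The paper's absorption is shorter. More importantly, it yields \eqref{eqzy12} with a constant independent of $k$, and this is exactly what Proposition \ref{selem} reuses for unique continuation. Your iteration would not give this for free, since its constants depend on the implicit constants in the successive $O(\cdot)$ bounds.
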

\begin{proof}
%	[Proof of Proposition \ref{filem}]
Without loss of generality, we assume that $x = 0$. Consider $0<\rho< \frac{1}{2}$ to be chosen later. We take any $y \in B_{\rho}\setminus\{0\}$ and consider, for each $j$, $f_j(z) \doteq z^{-k}(z-y)^{-1}\partial_zw_j(z).$
%\[
%f_j(z) \doteq z^{-k}(z-y)^{-1}\partial_zw_j(z).
%\]
Let $0 <\sigma < |y|$ be so small that $B_\sigma(y) \cap B_\sigma(0) = \emptyset$ and set $\Omega_{\sigma} \doteq B_{\rho}\setminus (B_{\sigma}(y)\cup B_{\sigma}(0))$.\ By the divergence theorem and Lemma \ref{teccomp}, we see that
\begin{equation*}
\int_{\partial \Omega_{\sigma}} f_j(z)d\gamma(z) = 2i\int_{\Omega_{\sigma}} \partial_{\overline z}f_j(z)dz  = 2i\int_{\Omega_{\sigma}}z^{-k}(z-y)^{-1}\partial_{z\overline{z}}w_j(z)dz = \frac{i}{2}\int_{\Omega_{\sigma}}z^{-k}(z-y)^{-1}\Delta w_j(z)dz.
\end{equation*}
Using the structure of $\Omega_{\sigma}$ we get:
\begin{equation}\label{fiid}
\begin{split}
\int_{\partial B_{\rho}} f_j(z)d\gamma(z)- \int_{\partial B_{\sigma}(y)} f_j(z)d\gamma(z) - \int_{\partial B_{\sigma}} f_j(z)d\gamma(z)= \frac{i}{2}\int_{B_{\rho} \setminus (B_{\sigma}(y)\cup B_{\sigma})}z^{-k}(z-y)^{-1}\Delta w_j(z)dz.
\end{split}
\end{equation}
We have:
\[
\lim_{\sigma \to 0}\int_{\partial B_{\sigma}(y)} f_j(z)d\gamma(z) = 2\pi i y^{-k}\partial_zw_j(y).
\]
Furthermore by assumption we know that $\partial_zw_j(z) = o(|z|^{k - 1})$ and hence 
\[
\lim_{\sigma \to 0}\int_{\partial B_{\sigma}} f_j(z)d\gamma(z) = 0.
\]
We thus let $\sigma \to 0$ in \eqref{fiid} to get:
\begin{equation}\label{eqA}
\int_{\partial B_{\rho}} z^{-k}(z-y)^{-1}\partial_zw_j(z)d\gamma(z)-2\pi i y^{-k}\partial_zw_j(y) = \frac{i}{2}\int_{B_{\rho}}z^{-k}(z-y)^{-1}\Delta w_j(z)dz.
\end{equation}
The last passage to the limit in the right-hand side is justified by the fact that, for all $y \in \R^2\setminus\{0\}$,
\[
z^{-k}(z-y)^{-1}\Delta w_j(z) \in L^1_{\loc}(B_1).
\]
To see this, notice that the only non-trivial pole is $0$, but, since $Dw(z) = o(|z|^{k-1})$, we have for a.e. $z \in B_\rho$,
\begin{equation}\label{boundw}
|z^{-k}(z-y)^{-1}\Delta w_j(z)| \overset{\eqref{eq:intineq}}{\le} |z|^{-k}|z-y|^{-1}g(z)|D w(z)| \le C'|z|^{-1}|z-y|^{-1}g(z).
\end{equation}
The constant $C'$ depends on $k$.\ Instead, $C > 0$ will denote a universal constant that can change line by line but that does not depend on $k$. Resuming our reasoning, the last function in \eqref{boundw} belongs to $L_{\loc}^1$ since $r > 2$. We need to estimate the right-hand side of \eqref{eqA}. To do so, we first estimate \eqref{eqA} using \eqref{eq:intineq}:
\[
| y^{-k}\partial_zw_j(y)| \le \int_{\partial B_{\rho}} |z|^{-k}|z-y|^{-1}|\partial_zw_j(z)|d|\gamma|(z) + C\int_{B_{\rho}}|z|^{-k}|z-y|^{-1}g(z)|Dw(z)|dz.
\]
Summing over $j$, we get to
\begin{equation}\label{eqzy}
| y|^{-k}|Dw(y)| \le C\int_{\partial B_{\rho}} |z|^{-k}|z-y|^{-1}|Dw(z)|d|\gamma|(z) + C\int_{B_{\rho}}|z|^{-k}|z-y|^{-1}g(z)|Dw(z)|dz.
\end{equation}
For any $z_0 \in B_\rho\setminus\{0\}$, multiply by $g(y)|y - z_0|^{-1}$ this expression and integrate over $B_{\rho}$ to get
\begin{equation}\label{eqzya}
\begin{split}
\int_{B_\rho}| y|^{-k}|Dw(y)|g(y)|y-z_0|^{-1}dy & \le C\int_{\partial B_{\rho}} |z|^{-k}\left[\int_{B_\rho}g(y)|y-z_0|^{-1}|z-y|^{-1}dy\right] |D w(z)|d|\gamma|(z) \\
&+ C\int_{B_{\rho}}|z|^{-k}\left[\int_{B_\rho}g(y)|y-z_0|^{-1}|z-y|^{-1}dy\right] g(z)|Dw(z)|dz.
\end{split}
\end{equation}
Notice that $|(z-y)(y-z_0)|^{-1} = |z-z_0|^{-1}|(z-y)^{-1} + (y-z_0)^{-1}|$, therefore:
\begin{equation}\label{eq:prelll}
\begin{split}
\int_{B_\rho}g(y)|(z-y)(y-z_0)|^{-1} dy &=\int_{B_\rho}g(y)|z-z_0|^{-1}|(z-y)^{-1} + (y-z_0)^{-1}| dy\\
& \le |z-z_0|^{-1}\int_{B_\rho}g(y)[|z-y|^{-1} + |y-z_0|^{-1}] dy.
\end{split}
\end{equation}
We observe that, due to H\"older's inequality and the fact that $r > 2$, for all $b \in \overline{B_\rho}$
\[
\int_{B_\rho}g(y)|y-b|^{-1}dy \le \|g\|_{L^r}\left(\int_{B_{\rho}}|y-b|^{-r'}dy\right)^{\frac{1}{r'}} \le C\left(\int_{B_{2\rho}(b)}|y-b|^{-r'}dy\right)^{\frac{1}{r'}}\le C\rho^{\frac{2}{r'}-1}.
\]
Thus, we can continue \eqref{eq:prelll} as:
\begin{equation}\label{eq:prelll1}
\begin{split}
\int_{B_\rho}g(y)|(z-y)(y-z_0)|^{-1} dy  \le C|z-z_0|^{-1}\rho^{\frac{2}{r'}-1}.
\end{split}
\end{equation}
We can finally estimate \eqref{eqzya}:
\begin{equation}\label{eqzy1}
\begin{split}
\int_{B_\rho}g(y)|y-z_0|^{-1}|y|^{-k}|Dw(y)|dy &\le C\rho^{\frac{2}{r'}-1}\int_{\partial B_{\rho}} |z|^{-k}|z-z_0|^{-1}|Dw(z)|d|\gamma|(z) \\
&   + C\rho^{\frac{2}{r'}-1}\int_{B_{\rho}}|z|^{-k}|z-z_0|^{-1}g(z)|Dw(z)|dz.
\end{split}
\end{equation}
Observe that the second addendum is finite, due to the same bounds as in \eqref{boundw}. Now fix $\rho$ so small that
\begin{equation}\label{eqzy1200}
1 -C\rho^{\frac{2}{r'}-1} > 0
\end{equation}
 and absorb the second addendum of the right-hand side of \eqref{eqzy1} in the left-hand side to get
\begin{equation}\label{eqzy12}
\int_{B_\rho}g(z)|z-y|^{-1}|z|^{-k}|Dw(z)|dz \le C\rho^{\frac{2}{r'}-1}\int_{\partial B_{\rho}} |z|^{-k}|z-y|^{-1}|Dw(z)|d|\gamma|(z).
\end{equation}
Notice that we changed the name of the variable of integration from $y$ to $z$ in the first integral on the left-hand side, and we renamed $z_0$ as $y$. Going back to \eqref{eqzy}, this shows that $Dw = O(|y|^k)$.\ Hence, if $F_y(z) \doteq z^{-k}(z-y)^{-1}\Delta w_j(z)$, we can bound for some constants $c_1,c_2 > 0$:
\[
|F_y(z)| \overset{\eqref{eq:intineq}}{\le} c_1 |z|^{-k}|z-y|^{-1}g(z)|Dw|(z) \le c_2|z-y|^{-1}g(z),
\]
Since $g \in L^r(B_1)$ and $r > 2$, $F_y(\cdot)$ has equibounded $L^p(B_\rho)$ norms for some $p > 1$.\ In addition, $\lim_{y \to 0}F_y(z) = F_0(z)$ for a.e. $z \in B_\rho$.\ Thus, we can pass to the limit in \eqref{eqA} to show the assertion, namely:
\[
-2\pi i \lim_{y \to 0}y^{-k}\partial_zw_j(y) = \frac{i}{2}\int_{B_{\rho}}z^{-k-1}\Delta w_j(z)dz - \int_{\partial B_{\rho}} z^{-k-1}\partial_zw_j(z)d\gamma(z), \quad \forall j = 1,\dots, n.
\]
\end{proof}

\begin{prop}\label{selem}
Let $w$ fulfill the assumptions of Theorem \ref{thm:stronguq}. If $Dw(z) = o(|z-x|^k)$ for some $x \in B_1$ and for all $k \in \N$, then $Dw \equiv 0$ in $B_r(x)$ for some $r>0$.
\end{prop}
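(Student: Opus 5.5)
We follow Hartman--Wintner's proof of their Theorem 2, reusing the estimates from the proof of Proposition \ref{filem}. Take $x=0$ and fix $\rho$ so small that \eqref{eqzy1200} holds. Since $Dw(z)=o(|z|^{k-1})$ for every $k$, the hypotheses of Proposition \ref{filem} hold for every $k\ge1$ simultaneously. We will therefore treat \eqref{eqzy} and \eqref{eqzy12} as valid for every $k$, with constants $C$ independent of $k$. This is exactly why the proof of Proposition \ref{filem} separates the $k$-dependent constant $C'$ from the universal $C$: $C'$ only certifies that the integrals are finite and never enters the final inequalities.

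Combining \eqref{eqzy} with \eqref{eqzy12} gives, for $y\in B_\rho\setminus\{0\}$,
\[
|y|^{-k}|Dw(y)|\le C\int_{\partial B_\rho}|z|^{-k}|z-y|^{-1}|Dw(z)|\,d|\gamma|(z)=C\rho^{-k}\int_{\partial B_\rho}|z-y|^{-1}|Dw(z)|\,d|\gamma|(z).
\]
Now restrict to $|y|\le\rho/2$. Then $|z-y|\ge\rho/2$ on $\partial B_\rho$, so the boundary integral is bounded by $C\rho^{-1}\cdot 2\pi\rho\,\sup_{\partial B_\rho}|Dw|\le M$. Here $M$ is independent of $k$, because $w\in C^{1,\gamma}$ makes $Dw$ bounded on $\partial B_\rho$. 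This yields
\[
|Dw(y)|\le M\left(\frac{|y|}{\rho}\right)^k\le M2^{-k}\quad\text{for all }k.
\]
Letting $k\to\infty$ gives $Dw(y)=0$ for all $|y|<\rho/2$, which proves the claim with $r=\rho/2$.

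The main obstacle is checking that every constant in the chain \eqref{eqzy}--\eqref{eqzy12} really is uniform in $k$. The factors $|z|^{-k}$ must appear identically on both sides, so that they either cancel or reduce to exactly $\rho^{-k}$ on the boundary. The absorption step needs only \eqref{eqzy1200}, which does not depend on $k$. The absorbed term is finite for each fixed $k$ by \eqref{boundw}, because $Dw=o(|z|^{k})$. Once this uniformity is verified, the argument is just the geometric decay above.
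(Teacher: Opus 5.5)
Your proof is correct. It uses the same mechanism as the paper: the constants in \eqref{eqzy} and \eqref{eqzy12} from the proof of Proposition \ref{filem} do not depend on $k$, so one may let $k\to\infty$. The difference is only in the final step.

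You combine \eqref{eqzy} with \eqref{eqzy12} at points $y\neq 0$ to get the pointwise decay $|Dw(y)|\le M(|y|/\rho)^k$ on $B_{\rho/2}$. This is the quantitative form of the bound $Dw=O(|y|^k)$ already noted in the proof of Proposition \ref{filem}.

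The paper instead applies \eqref{eqzy12} directly at the base point $y=0$. This is legitimate because $Dw=o(|z|^k)$ makes the absorbed term with weight $|z|^{-k-1}$ finite. It also normalizes $g\ge 1$ (harmless, by replacing $g$ with $\max\{g,1\}$). The result is $\int_{B_\rho}(\rho/|z|)^{k+1}|Dw|\,dz\le C$ uniformly in $k$, and the paper concludes with Fatou's lemma.

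Your route uses the inequalities only for $y\neq 0$, which is where they were actually derived. It needs neither the normalization of $g$ nor Fatou, and it yields an explicit geometric decay rate. The paper's route avoids the pointwise representation \eqref{eqzy} and gives vanishing on the whole ball $B_\rho$ at once. You could recover that too, by using $|z-y|\ge\rho-|y|$ on $\partial B_\rho$ for any $|y|<\rho$.
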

\begin{proof}
We again assume without loss of generality that $x = 0$, and moreover that $g \ge 1$ a.e in $B_1$. Due to our hypotheses, we see that \eqref{eqzy12} now holds for $y = 0$, any (fixed) $\rho > 0$ which fulfills \eqref{eqzy1200}, and all $k$:
\begin{equation}\label{eqzy000}
\int_{B_\rho}g(z)|z|^{-k-1}|Dw(z)|dz \le C\rho^{\frac{2}{r'}-1}\int_{\partial B_{\rho}} |z|^{-k-1}|Dw(z)|d|\gamma|(z) =  C\rho^{\frac{2}{r'}-2-k}\int_{\partial B_{\rho}}|Dw(z)|d|\gamma|(z).
\end{equation}
From the proof of Proposition \ref{filem} we know that the constant $C$ of \eqref{eqzy000} is independent of $k$. 
%Moreover, notice that inequality \eqref{eqzy000} actually holds for all $\rho < \rho_0$, where $\rho_0$ was chosen small in order to achieve.\ Choose any such $\rho$. 
Using that $g \ge 1$ a.e. and dividing both sides of \eqref{eqzy000} by $\rho^{-k-1}$, we get:
%\eqref{eqzy000} yields:
%\begin{equation}\label{eqzy123}
%\int_{B_\rho}|z|^{-k-1}|Dw(z)|dz \le C\rho^{\frac{2}{r'}-2-k}\int_{\partial B_{\rho}}|Dw(z)|d|\gamma|(z).
%\end{equation}
%Dividing both sides by $\rho^{-k-1}$, we obtain
\begin{equation}\label{eqzy1234}
	\int_{B_\rho}\frac{\rho^{k +1}}{|z|^{k+1}}|Dw(z)|dz \le C\rho^{\frac{2}{r'}-1}\int_{\partial B_{\rho}}|Dw(z)|d|\gamma|(z).
\end{equation}
Since $|z| \le \rho$ and the right-hand side does not depend on $k$, letting $k \to \infty$ shows $Dw\equiv 0$ on $B_\rho$ by Fatou's Lemma, as wanted.
%
%
%Now, either $Dw \equiv 0$ in the ball $B_{\rho_0/2}$ and the proof is concluded, or otherwise we can assume by contradiction that there exists $z_0 \in B_{\rho_0/2}$ such that $Dw(z_0) \neq 0$. Choose $\rho$ such that $|z_0| < \rho < 2|z_0|<\rho_0$. We can bound from below the left-hand side of \eqref{eqzy123} by
%\begin{equation}\label{eqzy223}
%c|z_0|^{-k-1} \le \int_{B_\rho}|z|^{-k-1}|Dw(z)|dz \le C\rho^{\frac{2}{r'}-2-k}\int_{\partial B_{\rho}}|Dw(z)|d|\gamma|(z),
%\end{equation}
%where $c = c(\rho,z_0)> 0$. Then \eqref{eqzy223} gives us:
%\[
%c \le  C\frac{\rho^{-1-k}}{|z_0|^{-1-k}},
%\]
%for all $k$ and for some $C = C(\rho)>0$ independent of $k$. This yields a contradiction by taking $k$ large enough since $|z_0| < \rho$. Thus $Dw \equiv 0$ in the ball $B_{\rho_0/2}$ as wanted.
\end{proof}

\begin{proof}[Proof of Theorem \ref{thm:stronguq}] Consider the closed set $$Z = \{x \in B_1: \forall r > 0 \text{ with }B_r(x) \subset B_1, B_r(x) \text{ contains at least two zeroes of }Dw\}.$$ Through Propositions \ref{filem}-\ref{selem} we see that $Z$ is open as well.\ Hence, either $Dw \equiv 0$ or the zeroes of $Dw$ are isolated. In the latter case, the claimed asymptotic expansion of $\partial_zw_j$ is then a consequence of Proposition \ref{filem}, and the fact that $a_{j_0} \neq 0$ for some $j_0$ is a consequence of Proposition \ref{selem}.
\end{proof}

\bibliographystyle{plain}
\bibliography{HMT-VerticalMeanCurvature-final}

\end{document}